\def\bu{\bullet}
\def\marker{\>\hbox{${\vcenter{\vbox{
    \hrule height 0.4pt\hbox{\vrule width 0.4pt height 6pt
    \kern6pt\vrule width 0.4pt}\hrule height 0.4pt}}}$}\>}
\def\gpic#1{#1
%     \midinsert\centerline{\box\graph}\endinsert }
     \smallskip\par\noindent{\centerline{\box\graph}} \medskip}
\newcommand{\TODO}[1]{{\todo[inline]{#1}}}
\newtheorem{theorem}{Theorem}[section]
\newtheorem{corollary}[theorem]{Corollary}
\newtheorem{lemma}[theorem]{Lemma}
\newtheorem{proposition}[theorem]{Proposition}
\theoremstyle{definition}
\newtheorem{definition}[theorem]{Definition}
\let\setminus\smallsetminus
\let\grminus\setminus
\let\epsi\varepsilon
\let\emptyset\varnothing
\newcommand{\brac}[1]{{\left(#1\right)}}
\newcommand{\sbrac}[1]{{\left[#1\right]}}
\newcommand{\set}[1]{\left\{#1\right\}}
\newcommand{\norm}[1]{{\left|#1\right|}}
\newcommand{\floor}[1]{{\left\lfloor #1 \right\rfloor}}
\newcommand{\ceil}[1]{{\left\lceil #1 \right\rceil}}
\newcommand{\Oh}[1]{O\brac{#1}}
\newcommand{\oh}[1]{o\brac{#1}}
\newcommand{\Om}[1]{\Omega\brac{#1}}
\newcommand{\om}[1]{\omega\brac{#1}}
\newcommand{\Ot}[1]{\Theta\brac{#1}}
\newcommand{\Nat}{\mathbb{N}}
\newcommand{\skel}{\operatorname{skel}}
\newcommand{\ext}{\operatorname{ext}}
\newcommand{\inter}[1]{\operatorname{int}\sbrac{#1}}
\newcommand{\minn}{\operatorname{minn}}
\newcommand{\maxn}{\operatorname{maxn}}
\newcommand\ie{i.e\@ifnextchar.{}{.\@}}
\newcommand\etc{etc\@ifnextchar.{}{.\@}}
\newcommand\etal{et~al\@ifnextchar.{}{.\@}}
\newcounter{Cases}\setcounter{Cases}{0}
\newcounter{CasesSub}\setcounter{CasesSub}{0}
\newcounter{CasesSubSub}\setcounter{CasesSubSub}{0}
\newcounter{CasesAll}\setcounter{CasesAll}{0}
\newcommand\caselab[1]{\def\@currentlabel{#1}}
\newcommand{\caselabel}[1]{\refstepcounter{CasesAll}\caselab{#1}}
\newcommand{\Cases}{\setcounter{Cases}{0}\setcounter{CasesSub}{0}\setcounter{CasesSubSub}{0}}
\newcommand{\Case}[1]{\stepcounter{Cases}\setcounter{CasesSub}{0}\setcounter{CasesSubSub}{0}\medskip\noindent\textbf{Case~\arabic{Cases}.\,\,#1.\\\noindent}\caselabel{\arabic{Cases}}}
\newcommand{\Subcase}[1]{\stepcounter{CasesSub}\setcounter{CasesSubSub}{0}\smallskip\noindent\textbf{Case~\arabic{Cases}.\arabic{CasesSub}.\,\,#1.\\\noindent}\caselabel{\arabic{Cases}.\arabic{CasesSub}}}
\newcommand{\Subsubcase}[1]{\stepcounter{CasesSubSub}\noindent\textbf{Case~\arabic{Cases}.\arabic{CasesSub}.\arabic{CasesSubSub}.\,\,#1.\\\noindent}\caselabel{\arabic{Cases}.\arabic{CasesSub}.\arabic{CasesSubSub}}}
\newcommand{\slowcol}[1]{\mathring{{\rm s}}\brac{#1}}
\newcommand{\arxiv}[1]{\href{http://arxiv.org/abs/#1}{\texttt{arXiv:#1}}}
\def\C#1{\left|{#1}\right|}
\def\spo{\mathring{{\rm s}}}
\def\st{\colon\,}
\def\set#1{\{#1\}}
\def\esub{\subseteq}
\def\nul{\varnothing}
\def\FR{\frac}
\def\FL{\floor}
\def\CL{\ceil}
\def\NN{{\mathbb N}}
\def\CH{\binom}
\def\VEC#1#2#3{#1_{#2},\ldots,#1_{#3}}
\def\SE#1#2#3{\sum_{#1=#2}^{#3}}
\def\cG{{\mathcal G}}
\def\cH{{\mathcal H}}
\def\cP{{\mathcal P}}
\def\deg{d}
\def\pc{\gamma}
\def\tsty{\textstyle}
\begin{document}

%%%%amsart header
%\title[Slow-coloring\,Game\,on\,Outerplanar,\,Planar,\,and\,$k$-degenerate\,Graphs]{The Slow-coloring Game on Outerplanar, \\ Planar, and $k$-degenerate Graphs}
%
%\author[G.~Gutowski]{Grzegorz Gutowski}
%\author[T.~Krawczyk]{Tomasz Krawczyk}
%\author[D.B.~West]{Douglas B. West}
%\author[M.~Zaj\k{a}c]{Micha{\l} Zaj\k{a}c}
%\author[X.~Zhu]{Xuding Zhu}
%\address[G.~Gutowski, T.~Krawczyk, M.~Zaj\k{a}c]{Theoretical Computer Science
%Department, Faculty of Mathematics and Computer Science, Jagiellonian
%University, Krak\'ow, Poland}
%\email{\{gutowski,krawczyk\}@tcs.uj.edu.pl, emzajac@gmail.com}
%\address[D.B.~West]{Department of Mathematics, Zhejiang Normal University,
%Jinhua, China, and University of Illinois, Urbana, IL, USA}
%\email{dwest@math.uiuc.edu}
%\address[X.~Zhu]{Department of Mathematics, Zhejiang Normal University, Jinhua, China}
%\email{xdzhu@zjnu.edu.cn}

%%%%header in standard article style
\title{The Slow-coloring Game on Sparse Graphs: \\
$k$-Degenerate, Planar, and Outerplanar
}

\author{
Grzegorz Gutowski\footnotemark[1],\quad
Tomasz Krawczyk\footnotemark[1],\quad
Krzysztof Maziarz\footnotemark[1],\quad\\
Douglas B. West\footnotemark[2]\,\,\footnotemark[3],\quad
Micha{\l} Zaj\k{a}c\footnotemark[1],\quad
Xuding Zhu\footnotemark[2]
}

\maketitle

\vspace{-2pc}

\begin{abstract}
The \emph{slow-coloring game} is played by Lister and Painter on a graph $G$.
Initially, all vertices of $G$ are uncolored.  In each round, Lister marks a
nonempty set $M$ of uncolored vertices, and Painter colors a subset of $M$
that is independent in $G$.  The game ends when all vertices are colored.
The score of the game is the sum of the sizes of all sets marked by Lister.
The goal of Painter is to minimize the score, while Lister tries to maximize it.
We provide strategies for Painter on various classes of graphs whose vertices
can be partitioned into a bounded number of sets inducing forests, including
$k$-degenerate, acyclically $k$-colorable, planar, and outerplanar graphs.
For example, we show that on an $n$-vertex graph $G$, Painter can keep the
score to at most $\frac{3k+4}4n$ when $G$ is $k$-degenerate, $3.9857n$ when $G$
is acyclically $5$-colorable, $3n$ when $G$ is planar with a Hamiltonian dual,
$\frac{8n+3m}5$ when $G$ is $4$-colorable with $m$ edges (hence $3.4n$ when $G$
is planar), and $\frac73n$ when $G$ is outerplanar.
\end{abstract}

\renewcommand{\thefootnote}{\fnsymbol{footnote}}
\footnotetext[1]{
Computer Science Department, Faculty of Mathematics and Computer Science,
Jagiellonian University, Krak\'ow, Poland.  {\it Email:}
\{gutowski,krawczyk\}@tcs.uj.edu.pl, \{krzysztof.s.maziarz,emzajac\}@gmail.com.
Research of G. Gutowski was partially supported by National Science Center of
Poland, grant UMO-2016/21/B/ST6/02165.  Research of T. Krawczyk, K. Maziarz,
and M. Zaj\k{a}c was partially supported by National Science Center of Poland,
grant UMO-2015/17/B/ST6/01873.}
\footnotetext[2]{Department of Mathematics, Zhejiang Normal University,
Jinhua, China.  {\it Email: xdzhu@zjnu.edu.cn}.
Research of X. Zhu supported in part by CNSF 00571319.}
\footnotetext[3]{Department of Mathematics, University of Illinois, Urbana, IL,
USA.  {\it Email:} {dwest@math.uiuc.edu}.
Research of D.B. West supported by Recruitment Program of Foreign Experts, 1000
Talent Plan, State Administration of Foreign Experts Affairs, China.}
\renewcommand{\thefootnote}{\arabic{footnote}}

\renewcommand\thesubfigure{\roman{subfigure}}

%\thanks{This research is partially supported by: Polish National Science Center UMO-2011/03/D/ST6/01370.}

\vspace{-1pc}

\section{Introduction}

%\TODO{
%  Define: tree-width, planar, outerplanar, acyclic chromatic number
%}

The \emph{slow-coloring game}, introduced by Mahoney, Puleo, and
West~\cite{MPW}, models proper coloring of graphs in a scenario with
restrictions on the coloring process.  The game is played by \emph{Lister} and
\emph{Painter} on a graph $G$.  Initially, all vertices of $G$ are uncolored.
In each round, Lister marks a nonempty set $M$ of uncolored vertices of $G$
and scores $\norm{M}$.  Painter responds by selecting an independent set
$X \subseteq M$ to receive the next color.  The game ends when all vertices in
$G$ are colored.  The score is the sum of the sizes of the sets marked by
Lister.  Lister's goal is to maximize the score; Painter's goal is to minimize
it.  The result when both players play optimally (to ensure the best possible
score they can guarantee) is denoted by $\slowcol{G}$ and called the
\emph{sum-color cost} of $G$.

A \emph{proper coloring} of a graph $G$ assigns distinct colors to adjacent
vertices.  The \emph{chromatic number} $\chi(G)$ is the minimum number of
colors in a proper coloring ($G$ is \emph{$k$-colorable} if $\chi(G)\le k$).
%The sets selected by Painter in the slow-coloring game are the color classes in
%a proper coloring of $G$.  Every set $M\esub V(G)$ contains an independent set
%of size at least $\C{M}/\chi(G)$, which means that the score on each round is
%at most $\chi(G)$ times the number of vertices colored on that round.
%Summing over all rounds yields $\spo(G)\le \chi(G)\C{V(G)}$ by this greedy
%Painter strategy.
The sets selected by Painter during the slow-coloring game together form a 
proper coloring of $G$, using many colors.  However, when $G$ is $k$-colorable,
every set $M\esub V(G)$ contains an independent set of size at least $|M|/k$,
which means that if Painter selects a largest independent subset then the score
on each round is at most $\chi(G)$ times the number of vertices colored on that
round.  Summing over all rounds yields $\spo(G)\le \chi(G)\C{V(G)}$.  Since in
the last round all remaining vertices are colored, the inequality is strict
unless $\chi(G)=1$.

Nevertheless, Wu~\cite{Wu} (presented in~\cite{MPW}) proved that the upper
bound $\spo(G)\le \chi(G)\C{V(G)}$ is asymptotically sharp; his general lower
bound for complete multipartite graphs shows that $\spo(G)\sim kn$ for the
complete $k$-partite $n$-vertex graph with part-sizes $\VEC r1k$ differing by
at most $1$.  Wu's general lower bound is $n+\sum_{i<j} u_{r_i}u_{r_j}$, where
$u_r=\max\{t\st\CH{t+1}2\le r\}\sim \sqrt{2r}$.  Our first general technique
yields an immediate upper bound for complete multipartite graphs that is very
close to this; see Corollary~\ref{kpart1} below.

Let $n=\C{V(G)}$.  Using the greedy Painter strategy suggested above, Mahoney,
Puleo, and West~\cite{MPW} improved the general upper bound of $\chi(G)n$
to $\max_{H\esub G}\FR{|V(H)|}{\alpha(H)}n$, where $\alpha(H)$ denotes the
maximum size of an independent set in $H$ (also
${\spo(G)}\ge \FR12(1+\FR{n}{\alpha(G)})n$).  They proved
$n+u_{n-1}\le\slowcol{G}\le \floor{\frac{3}{2}n}$ when $G$ is an $n$-vertex
tree.  The lower bound is exact for the star $K_{1,n-1}$, and the upper bound
is exact for the path $P_n$ (and other trees).  Puleo and West~\cite{PW}
extended this, characterizing the
trees achieving the upper and lower bounds and giving a linear-time algorithm
(and inductive formula) to compute $\spo(G)$ when $G$ is a tree.

Our general theme is to improve the bound $k\C{V(G)}$ for interesting special
classes of $k$-chromatic graphs (the coefficient $k$ cannot be improved for
general $k$-chromatic graphs).  Given a graph $G$ and $A\esub V(G)$, let $G[A]$
denote the subgraph of $G$ induced by $A$.  We present a general Painter
strategy to prove the following result.

\begin{theorem}\label{main}
Let $G$ be an $n$-vertex graph.  If $\spo(G[V_i])\le c_i|V_i|$ for
$1\le i\le t$, where $V(G)$ is the disjoint union of $\VEC V1t$, then
$\spo(G)\le \left(\sum_i\sqrt{c_i|V_i|}\right)^2$ and
$\spo(G)\le \left(\sum_i c_i\right)n$.
\end{theorem}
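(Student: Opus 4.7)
The plan is to combine the hypothesized sub-strategies $S_i$ on $G[V_i]$ (each guaranteeing score at most $c_i|V_i|$) into a strategy on $G$ via a weighted argmax. Set weights $w_i = \sqrt{c_i|V_i|}$. Painter maintains a virtual slow-coloring sub-instance on each $G[V_i]$ in which she plays $S_i$. When Lister marks $M$ in $G$, let $M_i = M \cap V_i$, and pick $i^* \in \arg\max_i |M_i|/w_i$. Painter feeds $M_{i^*}$ to sub-instance $i^*$, reads off the independent set $X \esub M_{i^*}$ prescribed by $S_{i^*}$, and colors $X$ in $G$; this is independent in $G$ because $G[V_{i^*}]$ is a subgraph of $G$. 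Sub-instances $j \ne i^*$ do not advance on this round.

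The validity check I anticipate being the main delicacy is that virtual instance $j$ remains consistent with reality throughout the game. Since Painter only colors vertices of $V_{i^*}$ in each round, no vertex of $V_j$ is touched in reality on rounds outside $T_j := \set{t : i^*(t) = j}$; so the vertices colored in instance $j$ coincide with the actually-colored vertices of $V_j$ at all times. In particular, when the real game terminates all of $V_j$ is colored, so sub-instance $j$ has been run to completion, and since the argmax value is positive whenever $M$ is nonempty, instance $j$ only sees nonempty markings. The $S_j$ guarantee therefore applies: $\sum_{t \in T_j}|M_j^{(t)}| \le c_j|V_j| = w_j^2$.

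To bound the total $G$-score, write $\sum_t|M^{(t)}| = \sum_j \sum_t |M_j^{(t)}|$ and split the inner sum over $T_j$ and its complement. The argmax rule gives $|M_j^{(t)}| \le (w_j/w_k)|M_k^{(t)}|$ for $t \in T_k$, so $\sum_{t \in T_k}|M_j^{(t)}| \le (w_j/w_k)\cdot w_k^2 = w_j w_k$; adding the two parts yields $w_j^2 + w_j\sum_{k\ne j}w_k = w_j\sum_k w_k$ for each $j$, hence $\spo(G) \le \brac{\sum_i w_i}^2 = \brac{\sum_i \sqrt{c_i|V_i|}}^2$. The second bound follows from Cauchy--Schwarz applied to $\sqrt{c_i}$ and $\sqrt{|V_i|}$: $\brac{\sum_i \sqrt{c_i|V_i|}}^2 \le \brac{\sum_i c_i}\brac{\sum_i |V_i|} = \brac{\sum_i c_i}n$. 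Once the sub-instance consistency in the second paragraph is established, the remaining counting is routine.
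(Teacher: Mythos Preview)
Your proof is correct and follows essentially the same approach as the paper: the same composite strategy with weights proportional to $\sqrt{c_i|V_i|}$, with only minor bookkeeping differences (you use an argmax rule and decompose $|M|=\sum_j|M_j|$ across rounds, whereas the paper uses the equivalent threshold rule $|M_i|/|M|\ge w_i$ to bound $|M|$ directly on each round). For the second inequality you invoke Cauchy--Schwarz where the paper expands and applies AM--GM termwise, which is a cosmetic difference.
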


The strategy ignores edges joining $V_i$ and $V_j$.  They may all be present,
which explains why a very good upper bound for complete multipartite graphs is
an immediate corollary.

\begin{corollary}\label{kpart1}
If $G$ is an $n$-vertex complete $k$-partite graph with part-sizes
$\VEC r1k$, then $\spo(G)\le n+2\sum_{i<j}\sqrt{r_ir_j}$.
\end{corollary}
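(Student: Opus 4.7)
The plan is to apply Theorem~\ref{main} directly, taking the $k$ parts of the complete multipartite graph as the sets $V_1,\ldots,V_k$ in the partition hypothesis. The key observation is that each part $V_i$ of a complete multipartite graph is an independent set, so $G[V_i]$ is an edgeless graph on $r_i$ vertices.

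Next I would verify that $\spo(G[V_i]) \le r_i$, i.e.\ that $c_i = 1$ works. On an edgeless graph, \emph{every} subset of vertices is independent, so whatever set $M$ Lister marks, Painter may color all of $M$ in a single round. Hence the game must end after one round, in which Lister marks all $r_i$ uncolored vertices (or the game ends immediately if Lister has already colored them), giving $\spo(G[V_i]) = r_i$.

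With $c_i = 1$ for all $i$, the first conclusion of Theorem~\ref{main} gives
\[
\spo(G) \le \left(\sum_{i=1}^k \sqrt{1 \cdot r_i}\right)^2 = \sum_{i=1}^k r_i + 2\sum_{i<j} \sqrt{r_i r_j} = n + 2\sum_{i<j} \sqrt{r_i r_j},
\]
which is exactly the claimed bound. The remark following Theorem~\ref{main} is precisely the point: the Painter strategy underlying Theorem~\ref{main} ignores the edges between distinct $V_i$ and $V_j$, so the worst-case complete multipartite structure is absorbed into the proof of Theorem~\ref{main} with no extra work here.

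There is essentially no obstacle — the only substantive step is recognizing that the intra-part induced subgraphs are edgeless and therefore have sum-color cost equal to their order. Everything else is a direct substitution into Theorem~\ref{main}.
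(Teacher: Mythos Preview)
Your proposal is correct and follows exactly the paper's own argument: take the parts as the $V_i$, observe that each $G[V_i]$ is edgeless so $c_i=1$, and expand the square in inequality~\eqref{eq1}. One tiny quibble: the game on an edgeless graph need not end in a single round (Lister may mark a proper subset), but since Painter can always color all of $M$, each vertex is marked exactly once over the course of the game and the total score is still $r_i$; your conclusion $\spo(G[V_i])=r_i$ is unaffected.
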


\noindent
Note that the bound simplifies to $kn$ when all part-sizes equal $r$.

Forests are $2$-colorable.  Because the trivial upper bound $\spo(G)\le 2n$ has
been improved to the optimal bound $\spo(G)\le\FL{\FR32 n}$ when $G$ is an
$n$-vertex forest, Theorem~\ref{main} provides useful bounds for graphs whose
vertices can be partitioned into $t$ sets inducing forests.  Such a graph
$G$ is $2t$-colorable, but Theorem~\ref{main} yields $\spo(G)\le\FR{3t}2n$.
We consider several classes of graphs admitting vertex partitions into a small
number of sets inducing forests.

A graph is \emph{$k$-degenerate} when every nonempty subgraph has a vertex of
degree at most $k$.  A graph is \emph{planar} if it can be drawn on the plane
so that edges intersect only at their endpoints, and it is \emph{outerplanar}
if it has a such a drawing with all vertices lying on the unbounded face.
A graph is \emph{acyclically $k$-colorable} if it has a proper $k$-coloring
with no $2$-colored cycle, meaning that the union of any two color classes
induces a forest.

Inductively, every $k$-degenerate graph is $(k+1)$-colorable.  Outerplanar
graphs are $2$-degenerate and hence $3$-colorable, and planar graphs are
$4$-colorable~\cite{AH,RSST}.  Acyclically $k$-colorable graphs by definition
are $k$-colorable.  In these classes, applying the general decomposition
results improves the trivial upper bound $\spo(G)\le\chi(G)|V(G)|$ as follows.

\begin{corollary}
Let $G$ be an $n$-vertex graph.

{\rm (a)} If $G$ is $k$-degenerate, then $\spo(G)\le \frac{3k+4}4n$ for even
$k$; $\spo(G)\le \frac{3k+3}{4}n$ for odd $k$.

{\rm (b)} If $G$ is acyclically $k$-colorable, then $\spo(G)\le \frac{3k}4 n$
for even $k$; $\spo(G)\le \frac{3k+1}4 n$ for odd $k$.

{\rm (c)} If $G$ is acyclically $k$-colorable and $k$ is odd, then 
$\spo(G)\le \frac1k[\sqrt{.75}(k-1)+1]^2 n$.  In particular, $\spo(G)\le3.9857$
for acyclically $5$-colorable graphs, which includes planar graphs.

{\rm (d)} If $G$ is a plane graph and the planar dual of $G$ has a spanning
cycle, then $\spo(G)\le 3n$.
\end{corollary}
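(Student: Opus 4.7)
The plan for all four parts is to apply Theorem~\ref{main} to a carefully chosen partition of $V(G)$ into induced forests (each contributing $c_i = 3/2$, using the established bound $\spo(F)\le\lfloor 3|V(F)|/2\rfloor$ for any forest $F$) and, where useful, a single induced independent set (contributing $c_i = 1$). For parts (a), (b), and (d) the bound $\spo(G)\le(\sum_i c_i)n$ suffices, while (c) calls for the stronger bound $\spo(G)\le(\sum_i\sqrt{c_i|V_i|})^2$.

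For (a), fix a $k$-degeneracy ordering $v_1,\ldots,v_n$, so each $v_i$ has at most $k$ back-neighbors. For odd $k$, I would greedily assign each $v_i$ a label in $\{1,\ldots,(k+1)/2\}$ so that at most one back-neighbor of $v_i$ receives the same label; a pigeonhole argument among $(k+1)/2$ labels and $k$ back-neighbors produces such a label, and each label class induces a forest. For even $k$, add a label $0$ whose class must be independent: if every label were forbidden for $v_i$, the $k/2$ forest labels would each appear on at least $2$ back-neighbors and label $0$ on at least $1$, totaling at least $k+1>k$, a contradiction. This produces $\sum c_i=(k/2)(3/2)+1=(3k+4)/4$ for even $k$ and $\sum c_i=3(k+1)/4=(3k+3)/4$ for odd $k$. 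Part (b) is parallel: by definition of acyclic $k$-coloring, the union of any two color classes induces a forest, so pair the $k$ color classes into $k/2$ forest pairs when $k$ is even (giving $\sum c_i=3k/4$) or into $(k-1)/2$ forest pairs plus one leftover independent class when $k$ is odd (giving $\sum c_i=(3k+1)/4$).

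For (c), reuse the odd-$k$ partition from (b) but apply the first bound of Theorem~\ref{main}. Let $s$ be the size of the independent color class and $p_1,\ldots,p_{(k-1)/2}$ the sizes of the forest pairs, so $\sum_j p_j=n-s$. Cauchy--Schwarz gives $\sum_j\sqrt{(3/2)p_j}\le\sqrt{3(k-1)(n-s)/4}$, whence
\[
\spo(G)\,\le\, f(s)\,:=\,\bigl(\sqrt{s}+\sqrt{3(k-1)(n-s)/4}\bigr)^2.
\]
A derivative computation locates the unique interior critical point of $f$ at $s^*=4n/(3k+1)$, which satisfies $s^*\ge n/k$ for all $k\ge 1$ (strict for $k\ge 3$), and $f$ is increasing on $[0,s^*]$. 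Painter is free to choose which color class to designate as the independent part; picking the smallest one (of size at most $n/k$ by pigeonhole) gives $\spo(G)\le f(n/k)=\frac{n}{k}\bigl(1+\sqrt{3/4}\,(k-1)\bigr)^2$. Specializing to $k=5$ and invoking Borodin's theorem that planar graphs are acyclically $5$-colorable yields the stated bound $3.9857n$.

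The substantive content lies in (d). Let $C^*$ be a Hamiltonian cycle in $G^*$ and let $E_C\subseteq E(G)$ be the set of edges dual to $C^*$. Since $C^*$ is a simple cycle in $G^*$, planar duality makes $E_C$ a bond (minimal edge cut) in $G$, so $G-E_C$ has exactly two connected components $T_1,T_2$. Euler's formula $n-m+f=2$ together with $|E_C|=|C^*|=f$ gives $|E(G-E_C)|=n-2$, which combined with having two components forces $G-E_C$ to be acyclic, so each $T_i$ is a tree. The bond property ensures every edge of $E_C$ has one endpoint in $V(T_1)$ and one in $V(T_2)$, so $G[V(T_i)]=T_i$ is an induced forest. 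Applying Theorem~\ref{main} to $V(G)=V(T_1)\sqcup V(T_2)$ with $c_1=c_2=3/2$ gives $\spo(G)\le 3n$. The main obstacle in (d) is picking the correct vertex partition: the naive split according to which side of the Jordan curve traced by $C^*$ each vertex lies on may leave cycles within one part, and one must instead split into the two components of $G-E_C$ supplied by the duality/Euler argument.
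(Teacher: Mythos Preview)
Your proof is correct, and for parts (a), (b), and (d) it follows the paper's approach essentially verbatim; for (d) the paper simply cites the classical fact (Stein) that a plane graph admits a vertex partition into two induced forests iff its dual is Hamiltonian, whereas you supply the duality/Euler argument explicitly. (A small aside: your closing worry in (d) is unfounded---the ``naive'' split by which side of the Jordan curve $C^*$ a vertex lies on \emph{is} the partition into components of $G-E_C$, since an edge of $G$ crosses $C^*$ precisely when its dual edge lies on $C^*$.)

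Part (c) is where you diverge slightly. The paper first lumps the $k-1$ larger color classes into a single set $A$ with coefficient $c_A=\tfrac34(k-1)$ and then invokes a separately stated two-set refinement (Theorem~\ref{gamma}) with $\gamma=(k-1)/k$. You instead keep the $(k-1)/2$ forest pairs separate in the first bound of Theorem~\ref{main}, collapse their contribution via Cauchy--Schwarz to obtain $f(s)=\bigl(\sqrt{s}+\sqrt{3(k-1)(n-s)/4}\bigr)^2$, and then carry out the monotonicity argument on $f$ directly. The two computations are equivalent and land on the identical bound; your route is self-contained from Theorem~\ref{main} alone, while the paper's packaging of the two-part calculus into Theorem~\ref{gamma} isolates it as a reusable lemma (used again in Proposition~\ref{lem:decomposition_acyclic}).
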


Except for Corollary~\ref{kpart1}, the bounds are most likely not sharp, since
the arguments allow extra edges that in the special classes cannot all appear.
Our closest constructions arise from disjoint unions of complete graphs.  Note
that $\spo(H)\le\spo(G)$ when $H\esub G$ and that $\spo$ is additive under
disjoint union.  Also, the complete graph $K_n$ satisfies
$\spo(K_n)=\CH{n+1}2=\FR{n+1}2n$, and $K_n$ is $(n-1)$-degenerate.
Thus, among $k$-degenerate graphs, a graph $G$ consisting of disjoint copies of
$K_{k+1}$ satisfies $\spo(G)=(\FR12 k+1)n$.  Using $k=3$ for planar graphs
and $k=2$ for outerplanar graphs yields lower bounds of $\FR52n$ and $2n$,
respectively.

The results mentioned above appear in Section~\ref{sec:decomposition}.
In Section~\ref{sec:potentials} and beyond, we introduce a different technique
with a more complicated algorithm for Painter that yields better
upper bounds.  Using appropriate ``potential functions'' on graphs, it provides
the following bounds.

\begin{theorem}\label{thm:outerplanar}\label{thm:pla}
Let $G$ be an $n$-vertex graph with $m$ edges.

{\rm (a)} If $G$ is $4$-colorable, then $\spo(G)\le \frac{8n+3m}5$
(in particular, $\spo(G)\le 3.4n$ when $G$ is planar).

{\rm (b)} If $G$ is outerplanar, then $\spo(G)\le\FR73 n$.
\end{theorem}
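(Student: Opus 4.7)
The plan for both parts is the potential-function method from Section~\ref{sec:potentials}. We design a nonnegative potential $\Phi$ on configurations (the remaining uncolored subgraph, possibly decorated with a fixed 4-coloring or outerplane embedding) so that (P1) the initial value $\Phi_0$ equals the right-hand side of the claimed bound, and (P2) for every Lister move $M$, Painter has a response $X\subseteq M$, independent in $G$, for which $\Phi$ decreases by at least $|M|$. Induction on the number of uncolored vertices then gives $\spo(G)\le \Phi_0$.

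For part~(a), the target $(8n+3m)/5$ suggests combining a vertex-weight term and an edge-weight term, refined through a fixed proper 4-coloring $C_1,C_2,C_3,C_4$ of $G$. Given Lister's set $M$, write $k_i=|M\cap C_i|$ after reindexing so that $k_1\ge k_2\ge k_3\ge k_4$. Painter's choice of $X$ is obtained by a case analysis on the profile $(k_1,\ldots,k_4)$: in generic cases, choosing $X$ to be the most populous color class of $M$ already supplies enough vertex-weight drop; in the extremal case where $G[M]$ is close to a 4-clique, the edge-weight term must compensate by accounting for many edges incident to the chosen $X$. The pair $(8/5,3/5)$ is pinned by requiring the per-round inequality to balance in the tightest configurations---e.g., $\spo(K_4)=10=(8\cdot 4+3\cdot 6)/5$, which is exactly tight. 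The planar corollary $\spo(G)\le 3.4n$ then follows from $m\le 3n-6$.

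For part~(b), we work with an outerplane embedding of $G$. Outerplanar graphs satisfy $m\le 2n-3$, are 2-degenerate, and are 3-colorable, which offers several structural levers: the 3-coloring guarantees an independent subset of $M$ of size at least $|M|/3$, 2-degeneracy produces a vertex of degree at most $2$ in every subgraph, and the outerplane structure lets Painter track which vertices currently lie on the outer face. A plausible potential is $\Phi=\alpha n'+\beta m'+\gamma\cdot(\text{boundary term})$, with $\alpha,\beta,\gamma$ chosen so that $\Phi_0=7n/3$ and so that the per-round drop inequality holds after Painter's response. Painter's strategy would combine the 3-coloring (to ensure a reasonably large $X$) with a preference for low-degree marked vertices (to make the edge-weight drop large whenever the vertex term alone is insufficient).

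The main obstacle in both parts is the joint design of $\Phi$ and the case analysis verifying (P2): the inequality $\Delta\Phi\ge|M|$ must hold for every configuration of $G[M]$, and the worst cases---typically small dense subgraphs such as 4-cliques in (a), or star-like or clique-like outerplanar configurations in (b)---dictate the exact constants and can force additional correction terms in $\Phi$ beyond a simple vertex/edge linear combination.
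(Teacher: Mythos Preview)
Your outline identifies the right framework---a potential function $\Phi$ with the property that Painter can always force $\Delta\Phi\ge|M|$---but the specific plan you describe has a real gap, not just missing details.

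For part~(a), you propose a flat vertex weight $8/5$ and edge weight $3/5$, and a fixed global 4-coloring $C_1,\dots,C_4$ of $G$ from which $X$ is chosen by case analysis on $(k_1,\dots,k_4)$. This fails already on the tight example you mention: when $M=V(K_4)$ and $X$ is any single vertex, the drop is $\tfrac{8}{5}+3\cdot\tfrac{3}{5}=\tfrac{17}{5}<4$. The paper repairs this by making the vertex potential degree-dependent, $\phi_G(v)=1+\tfrac15\min\{\deg_G(v),3\}$, so that deleting $X$ also lowers the potential of neighbors of $X$ that remain. With this correction the $K_4$ case gives drop exactly $4$. But even with the correct potential, a fixed global coloring together with the profile $(k_1,\dots,k_4)$ is not enough information to pick $X$: if $G[M]$ is a path on three vertices and the global coloring happens to assign them colors $1,2,3$, the profile is $(1,1,1,0)$, yet deleting an endpoint gives drop $2<3$ while deleting the middle vertex gives drop $3$. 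The profile alone cannot distinguish these.

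The paper's approach differs in two essential ways. First, Painter does not use a global coloring of $G$; instead, for each round she constructs a ``good'' $4$-coloring of $G[M]$ adapted to its tree/cycle structure (tree-components get only two colors). Second, she \emph{augments} the four color classes by assigning extra colors to certain low-degree vertices of $G[M]$, producing independent sets $V_1',\dots,V_4'$ that cover $M$ with overlap. She then shows $\sum_i u(V_i')\ge 0$ by a per-vertex accounting, so some $V_i'$ has nonnegative utility. The augmentation is precisely what handles the bad cases where a single color class is too thin.

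Part~(b) follows the same template with a different potential (edge weight $1/3$; vertex weight $5/3$, $4/3$, or $1$ depending on whether the vertex lies in a triangle or has degree $\ge 3$) and a good $3$-coloring of $G[M]$ with a similar augmentation step. Your proposed ``boundary term'' tied to the outer face does not appear; what matters instead is whether a vertex currently lies in a triangle, which drives the discharging in the case analysis.
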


\section{The Decomposition Method}\label{sec:decomposition}

We introduce a way to combine strategies for Painter on disjoint induced
subgraphs.

\begin{definition}\label{alpha-comb}
Let $\cP$ be a partition of the vertices of a graph $G$,
with $\cP=\{\VEC V1t\}$.  For $1\le i\le t$, let
$w_i=\sqrt{c_i|V_i|}/\sum_j\sqrt{c_j|V_j|}$, where $\spo(G[V_i])\le c_i|V_i|$.
The \emph{$\cP$-composite} strategy for Painter on $G$ is as follows.  When
Lister marks a set $M$, Painter chooses any index $i$ such that
$\C{M\cap V_i}/|M|\ge w_i$ and responds to the move $M\cap V_i$ according to an
optimal strategy on $G[V_i]$, ignoring the rest of $M$.  Such an index exists
because $\sum_i w_i=1$.  To be deterministic, Painter may choose the least such
index.
\end{definition}

\begin{theorem}\label{decomp}\label{lem:decomposition}
Let $\cP$ be a partition of the vertex set of a graph $G$ into sets
$\VEC V1t$.  If $\spo(G[V_i])\le c_i|V_i|$, then
\begin{equation}\label{eq1}
\spo(G)\le \Bigl(\sum_i\sqrt{c_i|V_i|}\Bigr)^2,
\end{equation}
and
\begin{equation}\label{eq2}
\spo(G)\le \Bigl(\sum_i c_i\Bigr)|V(G)|.
\end{equation}
\end{theorem}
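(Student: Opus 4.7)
The plan is to analyze the $\cP$-composite strategy from Definition~\ref{alpha-comb}. The key idea is that Painter's restricted moves on each $V_i$ simulate a valid play of the slow-coloring game on the induced subgraph $G[V_i]$, so the total cost charged to part $i$ is at most $\spo(G[V_i]) \le c_i|V_i|$; the overall score is then bounded by summing these pieces, weighted by the inverse weights $1/w_i$.

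First I would verify that the strategy is well-defined and that the simulations are legal. Because $\sum_i w_i = 1$, in every round some index $i$ satisfies $|M \cap V_i|/|M| \ge w_i$, so $M \cap V_i$ is nonempty and the optimal sub-strategy on $G[V_i]$ has something to respond to. The independent set $X \subseteq M \cap V_i$ returned by that sub-strategy is independent in $G$ as well, since the edges of $G[V_i]$ are precisely the edges of $G$ with both endpoints in $V_i$. Furthermore, for each fixed $i$, the sequence of restricted moves $M_r \cap V_i$ over rounds $r$ with $i(r) = i$ constitutes a legal Lister play on $G[V_i]$: each such move is nonempty, contains only vertices uncolored in the sub-game (because coloring in the full game and in the sub-game agree on $V_i$), and — since the full game eventually colors all vertices — terminates with all of $V_i$ colored.

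For the main estimate, the inequality $|M_r \cap V_{i(r)}|/|M_r| \ge w_{i(r)}$ gives $|M_r| \le |M_r \cap V_{i(r)}|/w_{i(r)}$. Summing over all rounds and grouping by the chosen index yields
\begin{equation*}
\spo(G) \le \sum_r |M_r| \le \sum_i \frac{1}{w_i} \sum_{r:\, i(r)=i} |M_r \cap V_i| \le \sum_i \frac{c_i|V_i|}{w_i}.
\end{equation*}
Substituting $w_i = \sqrt{c_i|V_i|}/\sum_j \sqrt{c_j|V_j|}$ collapses the right side to $\bigl(\sum_i \sqrt{c_i|V_i|}\bigr)^2$, establishing (\ref{eq1}). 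For (\ref{eq2}), one application of Cauchy--Schwarz gives $\bigl(\sum_i \sqrt{c_i|V_i|}\bigr)^2 \le \bigl(\sum_i c_i\bigr)\bigl(\sum_i |V_i|\bigr) = \bigl(\sum_i c_i\bigr)n$.

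I expect the only non-routine point to be the well-definedness check in the first step — confirming that the restricted Lister plays are genuinely valid inputs to the optimal $G[V_i]$-strategies, so that the bound $c_i|V_i|$ on the sub-game score is actually applicable. Once that is settled, the rest is a single telescoping charge followed by Cauchy--Schwarz.
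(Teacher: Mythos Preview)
Your proof is correct and follows the same strategy as the paper for inequality~(\ref{eq1}): analyze the $\cP$-composite strategy, bound $|M|$ by $|M\cap V_i|/w_i$ in each round, and sum. For inequality~(\ref{eq2}) you apply Cauchy--Schwarz to $\sum_i\sqrt{c_i}\cdot\sqrt{|V_i|}$, whereas the paper expands the square and applies the AM--GM inequality termwise via $2\sqrt{c_i|V_j|c_j|V_i|}\le c_i|V_j|+c_j|V_i|$; the two derivations are equivalent, and yours is slightly more direct.
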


\begin{proof}
Our main task is to prove~\eqref{eq1}, from which~\eqref{eq2} follows by
numerical arguments.

We use the $\cP$-composite strategy for Painter, with weights $\VEC w1t$ as in
Definition~\ref{alpha-comb}.  When Lister marks $M$ and Painter choose the
index $i$ to play on $G[V_i]$, we have $\C{M\cap V_i}\ge w_i|M|$, and hence
$|M|\le \C{M\cap V_i}/w_i$.

Rounds played in $G[V_i]$ form a game on $G[V_i]$ played optimally by Painter.
Therefore, over those rounds $\C{M\cap V_i}$ sums to at most $c_i|V_i|$ and
$|M|$ sums to at most $c_i|V_i|/w_i$.  Over all rounds, 
$\spo(G)\le \sum_i (c_i|V_i|/w_i) =\bigl(\sum_i\sqrt{c_i|V_i|}\bigr)^2$,
completing the proof of~\eqref{eq1}

%When Lister marks $M$ and Painter uses $S_A$ on $M \cap A$,
%we have $\norm{M \cap A} \ge \alpha \norm{M \cap B}$, so
%$$
%\norm{M} = \norm{M \cap A} + \norm{M \cap B}
%\le \brac{1 + \frac{1}{\alpha}}\norm{M \cap A}.
%$$
%Using $\slowcol{G\sbrac{A}} \le c_A\norm{A}$, we obtain
%$\sum \norm{M \cap A} \le c_A\norm{A}$, where the sum is taken over all rounds
%in which Painter uses $S_A$.  Thus the total score in those rounds
%is at most $\brac{1 + \frac{1}{\alpha}}c_A\norm{A}$.
%
%When Painter uses $S_B$, the same argument yields
%$\C M<(1+\alpha)\C{M\cap B}$.  Over all such rounds, using
%$\spo(G[B])\le c_B\C{B}$, the total score is less than $(1+\alpha)c_B\C B$.
%Using this algorithm, we obtain
%$\spo(G)<c_A\norm{A}+c_B\norm{B}+\frac1{\alpha}c_A\norm{A}+\alpha c_B\norm{B}$,
%which simplifies when $\alpha = \sqrt{\frac{c_A\norm{A}}{c_B\norm{B}}}$ to
%$$
%\slowcol{G} \le c_A\norm{A} + c_B\norm{B} + 2 \sqrt{c_A\norm{A} c_B\norm{B}} =
%\brac{\sqrt{c_A\norm{A}} + \sqrt{c_B\norm{B}}}^2.
%$$

%For~(\ref{item:decomposition:A}), note that
%$2\sqrt{c_A\C B c_B\C A}\le c_A\C B+c_B\C A$.  This and
%(\ref{item:decomposition:C}) yield
%$$
%\slowcol{G} \le \brac{\sqrt{c_A\norm{A}} + \sqrt{c_B\norm{B}}}^2 
%\le \brac{\sqrt{\brac{c_A + c_B}\brac{\norm{A} + \norm{B}}}}^2 
%= \brac{c_A + c_B}\norm{V(G)}.
%$$
Next, the Arithmetic-Geometric Mean Inequality yields
$2\sqrt{c_i\C{V_j}c_j\C{V_i}}\le c_i\C{V_j}+c_j\C{V_i}$.  Using this in
the expansion of~\eqref{eq1} yields
$$
\begin{array}{rcl}
\slowcol{G} &\le & \bigl(\sum_i \sqrt{c_i\C{V_i}}\bigr)^2
~=~ \sum_i c_i\C{V_i} + \sum_{i<j}2\sqrt{c_i\C{V_i}c_j\C{V_j}} \\
&=&\vbox to17pt{} \sum_i c_i\C{V_i}+\sum_{i<j}2\sqrt{c_i\C{V_j}c_j\C{V_i}}
~\le~ \sum_i c_i\C{V_i} + \sum_{i<j}(c_i\C{V_j}+c_j\C{V_i})\\
&=& \vbox to17pt{}\sum_i c_i\sum_j\C{V_j} ~=~ \left(\sum_i c_i\right) \C{V(G)}.
\end{array}
$$

\vspace{-2pc}
\end{proof}

\bigskip
As an immediate corollary, consider the complete $k$-partite graph with
part-sizes $\VEC r1k$.  The lower bound by Wu~\cite{Wu} is
$\sum_i r_i+\sum_{i<j} u_{r_i}u_{r_j}$, where $u_r=\max\{t\st\CH{t+1}2\le r\}$.
Note that
$u_r=\FL{(-1+\sqrt{1+8r})/2}\approx\sqrt{2r}$.  Actually, $u_r$ is a bit
smaller than $\sqrt{2r}$, but within a small constant.  Hence our corollary is
very close to the lower bound.  Note that~\eqref{eq2} gives only the trivial
bound $k|V(G)|$.

\begin{corollary}\label{kpart}
If $G$ is an $n$-vertex complete $k$-partite graph with part-sizes
$\VEC r1k$, then $\spo(G)\le n+2\sum_{i<j}\sqrt{r_ir_j}$.
\end{corollary}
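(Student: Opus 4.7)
The plan is to apply Theorem~\ref{decomp}, specifically inequality~\eqref{eq1}, to the natural partition of $V(G)$ into the $k$ parts of the multipartite structure.

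First I would take $\cP=\{\VEC V1k\}$ to be this partition, where $V_i$ is the $i$th part, with $|V_i|=r_i$. Since $G$ is complete $k$-partite, each $V_i$ is an independent set, so the induced subgraph $G[V_i]$ has no edges. For an edgeless graph on $r_i$ vertices, Painter's optimal strategy is trivial: whenever Lister marks a set $M$, Painter simply colors all of $M$ in a single round (legal because $M$ is independent). Lister's best response is to mark all vertices at once, yielding score exactly $r_i$. Thus $\spo(G[V_i])=r_i$, so we may take $c_i=1$ in Theorem~\ref{decomp}.

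Then I would plug $c_i=1$ and $|V_i|=r_i$ into~\eqref{eq1}, obtaining
\[
\spo(G)\le \Bigl(\sum_{i=1}^k \sqrt{r_i}\Bigr)^2
= \sum_{i=1}^k r_i + 2\sum_{i<j}\sqrt{r_i r_j}
= n + 2\sum_{i<j}\sqrt{r_i r_j},
\]
which is exactly the claimed bound.

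There is no real obstacle here; the entire content of the corollary is the choice of partition (the color classes of the obvious proper $k$-coloring) together with the observation that the $\cP$-composite strategy of Definition~\ref{alpha-comb} deliberately ignores edges joining different parts, so the presence of all cross-edges making $G$ complete multipartite costs nothing in the bound. The only small remark worth making is that inequality~\eqref{eq2} would give only the trivial bound $kn$, so it is essential to use the stronger bound~\eqref{eq1} to extract the square-root terms that match Wu's lower bound up to the approximation $u_r\approx\sqrt{2r}$.
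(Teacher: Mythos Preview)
Your proof is correct and essentially identical to the paper's own argument: partition $V(G)$ into the $k$ parts, observe that each part is independent so $\spo(G[V_i])=|V_i|$ and hence $c_i=1$, then expand~\eqref{eq1}. The paper's proof is just a terser version of what you wrote.
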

\begin{proof}
Use the maximal independent sets as the parts $\VEC V1k$ in a partition of
$V(G)$.  Since $\spo(G[V_i])=\C{V_i}$, we have $c_i=1$ for all $i$.
Hence expanding~\eqref{eq1} yields the claim.
\end{proof}

\bigskip
The case $k=2$ of this upper bound was proved in~\cite{MPW}.  Wu~\cite{Wu}
actually proved the better upper bound
$n+\sum_{i<j}\sqrt{2r_i-1}\sqrt{2r_j-1}$ by a more difficult argument.
That bound has the virtue of being exact for some stars.

Theorem~\ref{decomp} and the bound of~\cite{MPW} for forests can be combined
to obtain upper bounds when the vertices of a graph can be partitioned into a
small number of sets inducing forests.

\begin{theorem}\label{arb}
If the vertex set of a graph $G$ can be partitioned into $t$ sets inducing 
forests, then $\spo(G)\le \FR{3t}2\C{V(G)}$.  If it can be partitioned into
one independent set and $t-1$ sets inducing forests, then 
$\spo(G)\le \FR{3t-1}2\C{V(G)}$.
\end{theorem}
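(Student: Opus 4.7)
The plan is to apply Theorem~\ref{decomp} directly, specifically inequality~\eqref{eq2}, which bounds $\spo(G)$ by $\bigl(\sum_i c_i\bigr)|V(G)|$ for any vertex partition $\{V_1,\ldots,V_t\}$ with $\spo(G[V_i])\le c_i|V_i|$. The only ingredients needed are bounds on $\spo$ for the two kinds of ``atoms'' that appear in the partitions: forests and independent sets.

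First I would record the two constants. For a forest $F$ on $n_F$ vertices, the result of~\cite{MPW} mentioned in the introduction gives $\spo(F)\le\FL{\FR32 n_F}\le\FR32 n_F$, so we may take $c_i=\FR32$ on any part inducing a forest. For an independent set $I$ on $n_I$ vertices, Painter simply colors every marked vertex in each round, so $\spo(G[I])=n_I$; hence $c_i=1$ on an independent part. (The equality here is immediate: Lister gains nothing by splitting his marks across rounds when Painter can accept all of $M$.)

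Now I would plug these constants into~\eqref{eq2}. For the first claim, each of the $t$ parts is a forest, so
\[
\spo(G)\le\Bigl(\sum_{i=1}^t c_i\Bigr)|V(G)| \;=\; t\cdot\FR32\,|V(G)| \;=\; \FR{3t}{2}|V(G)|.
\]
For the second claim, one part contributes $c_i=1$ and the remaining $t-1$ parts contribute $c_i=\FR32$, giving
\[
\spo(G)\le\Bigl(1+(t-1)\cdot\FR32\Bigr)|V(G)| \;=\; \FR{3t-1}{2}|V(G)|.
\]

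There is essentially no obstacle here: the content of the theorem is entirely in assembling the decomposition framework of Theorem~\ref{decomp} with the forest bound from~\cite{MPW} and the trivial bound for independent sets. The only thing worth double-checking is that~\eqref{eq2} (rather than the often-sharper~\eqref{eq1}) is the right inequality to invoke, since the statement is phrased with a coefficient depending only on $t$ and not on the relative sizes of the parts; using~\eqref{eq1} could give a smaller number for specific partitions but would not produce the clean bound stated in the theorem.
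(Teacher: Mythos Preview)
Your proof is correct and matches the paper's argument exactly: apply inequality~\eqref{eq2} of Theorem~\ref{decomp} with $c_i=\frac32$ on each forest part (using the bound from~\cite{MPW}) and $c_i=1$ on the independent part. Nothing more is needed.
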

\begin{proof}
Let $\VEC V1t$ be the given vertex partition.  To apply Theorem~\ref{decomp},
we set $c_i=3/2$ for each $i$, except $c_t=1$ in the second case.  The claim
then follows immediately from~\eqref{eq2}.
\end{proof}

By induction on the number of vertices, every $k$-degenerate graph has a
vertex ordering in which every vertex has at most $k$ earlier neighbors; call
this a \emph{$k$-ordering}.  The following lemma is well-known.

\begin{lemma}\label{degenfor}
Let $G$ be a $k$-degenerate graph.  For $\VEC k1t$ such that
$\SE i1t (k_i+1)\ge k+1$, there is a partition of $V(G)$ into sets $\VEC V1t$
such that $G[V_i]$ is $k_i$-degenerate, for each $i$.
\end{lemma}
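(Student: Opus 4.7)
The plan is to exploit a $k$-ordering of $G$ and assign vertices to the parts $V_1,\ldots,V_t$ greedily in that order, using the hypothesis $\sum_i(k_i+1)\ge k+1$ to guarantee that a valid choice is always available.

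First I would fix a $k$-ordering $v_1,v_2,\ldots,v_n$ of $V(G)$, which exists because $G$ is $k$-degenerate. Call the earlier neighbors of $v_j$ in this ordering the \emph{back-neighbors} of $v_j$; by hypothesis, $v_j$ has at most $k$ back-neighbors. I would then process the vertices in order $v_1,v_2,\ldots$ and, upon reaching $v_j$, place it into some part $V_i$ such that $v_j$ has at most $k_i$ back-neighbors lying in $V_i$. Assuming such a choice is always possible, the restriction of the ordering to $V_i$ witnesses that $G[V_i]$ is $k_i$-degenerate, since every vertex of $V_i$ has at most $k_i$ earlier neighbors within $V_i$.

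The only thing to check, which is the key step, is that at every stage a valid part is available. Suppose to the contrary that when $v_j$ is being placed, every part $V_i$ already contains at least $k_i+1$ back-neighbors of $v_j$. Summing over $i$ would give
\[
\sum_{i=1}^{t}(k_i+1) \le \#\set{\text{back-neighbors of } v_j} \le k,
\]
contradicting $\sum_{i=1}^{t}(k_i+1)\ge k+1$. Hence at least one index $i$ works, and the greedy procedure succeeds for every $v_j$.

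I do not anticipate any real obstacle here: the argument is a clean pigeonhole on a $k$-ordering. The only thing to be careful about is that the partition is built along the \emph{same} ordering used to witness $k$-degeneracy of $G$, so that the restricted ordering automatically witnesses $k_i$-degeneracy of each $G[V_i]$ without any further work.
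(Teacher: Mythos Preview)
Your argument is correct and is essentially identical to the paper's proof: both fix a $k$-ordering, greedily assign each vertex to a part $V_i$ in which it has at most $k_i$ earlier neighbors, and observe via pigeonhole that failure would force more than $k$ back-neighbors. You spell out a bit more explicitly why the restricted ordering witnesses $k_i$-degeneracy of $G[V_i]$, but there is no substantive difference.
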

\begin{proof}
Such a partition is produced iteratively by considering the vertices in the
order of a $k$-ordering.  When we reach the $j$th vertex, it can be placed
safely in one of the sets, because having more than $k_i$ earlier neighbors in
each evolving set $V_i$ requires the vertex to have more than $k$ neighbors in
$G$ that are earlier in the $k$-ordering.
\end{proof}

This idea of Lemma~\ref{degenfor} was used by Chartrand and Kronk~\cite{CK} to
show that a $k$-degenerate graph decomposes into $\CL{(k+1)/2}$ forests
(forests are the $1$-degenerate graphs).  We use $(k+1)/2$ forests when $k$ is
odd, $k/2$ forests plus one independent set when $k$ is even.  The corollary
then follows from Theorem~\ref{arb}.  Similarly, when a graph is acyclically
$k$-colorable, again the conditions of Theorem~\ref{arb} apply, since
grouping color classes in pairs provides a decomposition of the vertex set
into sets inducing forests.

\begin{corollary}\label{degen}
Let $G$ be an $n$-vertex graph.  If $G$ is $k$-degenerate, 
then $\spo(G)\le \frac{3k+4}4n$ for even $k$ and 
$\spo(G)\le \frac{3k+3}{4}n$ for odd $k$.
If $G$ is acyclically $k$-colorable, then $\spo(G)\le \frac{3k}4 n$ for
even $k$ and $\spo(G)\le \frac{3k+1}4 n$ for odd $k$.
\end{corollary}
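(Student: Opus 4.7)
The plan is to combine the two ingredients already set up: Lemma~\ref{degenfor}, which partitions a $k$-degenerate graph into vertex sets with prescribed degeneracies, and Theorem~\ref{arb}, which bounds $\spo(G)$ whenever $V(G)$ is partitioned into forest-inducing parts (with possibly one of the parts independent). Since forests are exactly the $1$-degenerate graphs and independent sets are $0$-degenerate, both halves of the corollary reduce to choosing the most efficient partition and reading off the constant from Theorem~\ref{arb}.

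For the $k$-degenerate case, I would split into cases by the parity of $k$. When $k$ is odd, take $t=(k+1)/2$ and set each $k_i=1$ in Lemma~\ref{degenfor}: then $\sum_i(k_i+1)=k+1$, so Lemma~\ref{degenfor} produces a partition of $V(G)$ into $(k+1)/2$ forests, and the first part of Theorem~\ref{arb} gives $\spo(G)\le\frac{3t}{2}n=\frac{3k+3}{4}n$. When $k$ is even, I instead take $k/2$ parts with $k_i=1$ plus one extra part with $k_i=0$, giving $t=k/2+1$ and again $\sum_i(k_i+1)=k+1$; now Lemma~\ref{degenfor} yields $k/2$ forests and one independent set, so the second part of Theorem~\ref{arb} gives $\spo(G)\le\frac{3t-1}{2}n=\frac{3k+4}{4}n$.

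For the acyclically $k$-colorable case, the key observation (already in the definition) is that any two color classes of an acyclic coloring together induce a forest. So I would take an acyclic $k$-coloring and group color classes as follows. When $k$ is even, pair up all $k$ classes into $k/2$ forest-inducing sets and apply the first part of Theorem~\ref{arb} with $t=k/2$ to get $\spo(G)\le\frac{3k}{4}n$. When $k$ is odd, pair up $k-1$ of the classes into $(k-1)/2$ forest-inducing sets and leave the remaining class as a single independent set; this gives $t=(k+1)/2$ parts with exactly one independent, so the second part of Theorem~\ref{arb} yields $\spo(G)\le\frac{3t-1}{2}n=\frac{3k+1}{4}n$.

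There is no real obstacle here: the corollary is designed precisely to fall out of Lemma~\ref{degenfor} and Theorem~\ref{arb}. The only mild subtlety is the parity bookkeeping — in each case one asks whether it is cheaper (in the constant $\frac{3t}{2}$ versus $\frac{3t-1}{2}$) to pay for one more forest or to swap a forest for an independent set. Checking the two parities shows that the choices listed above are optimal and match the stated constants, so the proof is essentially a one-line invocation in each of the four subcases.
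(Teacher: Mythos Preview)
Your proposal is correct and matches the paper's own proof essentially line for line: apply Lemma~\ref{degenfor} (respectively, pair up color classes of an acyclic coloring) to get the appropriate partition into forests plus possibly one independent set, then read off the constant from Theorem~\ref{arb}. The parity bookkeeping you carried out is exactly what the paper does.
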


Although this upper bound for $k$-degenerate graphs is halfway between the
trivial upper bound and the trivial lower bound, it does not seem strong,
because the argument allows all edges joining vertices in distinct forests in
the partition, but having all such edges would contradict the degree
requirements in the full $k$-degenerate graph.

In general, when the coefficients $\VEC c1t$ are the same and we do not know
the sizes of $\VEC V1t$, the bound from~\eqref{eq1} does not improve
on~\eqref{eq2}.  The reason is that the square-root function is concave, and
hence the bound in~\eqref{eq1} is largest when the parts have equal size.  In
that case the bound $(\sum_i \sqrt{c_i|V_i|})^2$ becomes $(t\sqrt{cn/t})^2$,
which equals $ctn$.

When $V(G)$ is partitioned into one independent set and $t-1$ sets inducing
forests (such as when $G$ is $(2t-2)$-degenerate or acyclically
$(2t-1)$-colorable), we can improve on the bound in Theorem~\ref{arb} by
using a result intermediate between~\eqref{eq1} and~\eqref{eq2} that takes
advantage of the difference between the coefficients $3/2$ for the forests
and $1$ for the independent set.

\begin{theorem}\label{gamma}
Let $A$ and $B$ partition the vertex set of a graph $G$.  If
$\spo(G\sbrac{A})\le c_A\norm{A}$ and $\spo(G\sbrac{B})\le c_B\norm{B}$, and
$\gamma$ is a constant satisfying
$\frac{c_A}{c_A + c_B}\le\gamma\le \frac{\norm{A}}{\norm{A}+\norm{B}}$,
then
$$\slowcol{G} \le \brac{\sqrt{c_A \gamma} + \sqrt{c_B \brac{1 - \gamma}}}^2\norm{V(G)}\text{.}$$
\end{theorem}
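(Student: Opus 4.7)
The plan is to adapt the $\cP$-composite strategy of Definition~\ref{alpha-comb} by tuning the two weights not to $\sqrt{c_i|V_i|}$, but to the parameter $\gamma$ in such a way that the resulting score bound matches the target expression. Specifically, let $D=\sqrt{c_A\gamma}+\sqrt{c_B(1-\gamma)}$ and set
$$w_A=\frac{\sqrt{c_A\gamma}}{D},\qquad w_B=\frac{\sqrt{c_B(1-\gamma)}}{D}.$$
These are nonnegative and sum to $1$, so on every round at least one of the two sides $i\in\{A,B\}$ satisfies $|M\cap V_i|/|M|\ge w_i$. Painter plays exactly as in Definition~\ref{alpha-comb}: she picks such an $i$ and responds to $M\cap V_i$ via an optimal strategy on $G[V_i]$, ignoring the rest of $M$.

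Exactly as in the proof of Theorem~\ref{lem:decomposition}, the rounds played on side $A$ form a legitimate slow-coloring game on $G[A]$ in which Painter plays optimally, so $|M\cap A|$ summed over those rounds is at most $c_A|A|$; similarly on side $B$. Since $|M|\le |M\cap V_i|/w_i$ whenever side $i$ is chosen, summing over all rounds gives
$$\spo(G)\le \frac{c_A|A|}{w_A}+\frac{c_B|B|}{w_B}
=D\Bigl(\sqrt{c_A/\gamma}\,|A|+\sqrt{c_B/(1-\gamma)}\,|B|\Bigr).$$
Writing $n=|V(G)|$, it therefore suffices to prove
$$\sqrt{c_A/\gamma}\,|A|+\sqrt{c_B/(1-\gamma)}\,|B|\le \sqrt{c_A\gamma}\,n+\sqrt{c_B(1-\gamma)}\,n.$$

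Set $\delta=|A|-\gamma n$, so that $|B|-(1-\gamma)n=-\delta$. The inequality above rearranges to
$$\delta\Bigl(\sqrt{c_A/\gamma}-\sqrt{c_B/(1-\gamma)}\Bigr)\le 0.$$
The upper bound $\gamma\le |A|/n$ gives $\delta\ge 0$, while $\gamma\ge c_A/(c_A+c_B)$ is equivalent to $c_A(1-\gamma)\le c_B\gamma$, i.e., $\sqrt{c_A/\gamma}\le\sqrt{c_B/(1-\gamma)}$. Thus the two factors carry opposite signs (weakly), and the product is $\le 0$ as required.

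The main step of the argument is guessing the right weights; once they are identified, the two hypotheses on $\gamma$ turn out to be exactly what the final sign comparison needs, and the algebra collapses with no surprises. I do not expect any serious obstacle, only the slightly finicky algebraic verification that the $\cP$-composite bound $\sum_i c_i|V_i|/w_i$ simplifies to $D^2n$ under the stated constraints on $\gamma$.
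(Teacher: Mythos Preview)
Your proof is correct. It takes a route that is close in spirit to the paper's but organised differently. The paper first invokes inequality~\eqref{eq1} of Theorem~\ref{decomp} as a black box to get $\spo(G)\le g(\beta)^2\,n$ with $\beta=|A|/n$ and $g(x)=\sqrt{c_Ax}+\sqrt{c_B(1-x)}$, and then shows via the derivative $g'(x)$ that $g$ is nonincreasing on $[c_A/(c_A+c_B),1]$, so $g(\beta)\le g(\gamma)$. You instead go back inside the composite strategy and tune the weights to depend on $\gamma$ rather than on $|A|$, obtaining the bound $c_A|A|/w_A+c_B|B|/w_B$ directly and then checking the needed algebraic inequality by a sign argument. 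The paper's version is more modular (it reuses Theorem~\ref{decomp} and separates the monotonicity step), while yours is self-contained, avoids calculus, and makes transparent exactly why the two hypotheses on $\gamma$ are the right ones: each controls the sign of one factor in your final product $\delta\bigl(\sqrt{c_A/\gamma}-\sqrt{c_B/(1-\gamma)}\bigr)$.
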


%$$
%\begin{array}{rcl}
%\slowcol{G} &\le & \brac{\sqrt{c_A\norm{A}} + \sqrt{c_B\norm{B}}}^2 \\
%&\le& \brac{\sqrt{\brac{c_A + c_B}\brac{\norm{A} + \norm{B}}}}^2 \\
%&=& \brac{c_A + c_B}\norm{V(G)}\text{,}
%\end{array}
%$$

\begin{proof}
The condition for the existence of such $\gamma$ is equivalent to
$c_A\C B\le c_B\C A$; we may label $A$ and $B$ so that this holds and there is
a claim to prove.  Let $\beta= \FR{\C A}{\C{V(G)}}$, and let
$g(x)=\sqrt{c_Ax}+\sqrt{c_B(1-x)}$ for $0\le x\le 1$.  By~\eqref{eq1}, we have
$$
\spo(G)\le \brac{\sqrt{c_A\norm{A}}+\sqrt{c_B\norm{B}}}^2 = g(\beta)^2\C{V(G)}.
$$
Since $g'(x)=\FR12\brac{\sqrt{{c_A}/{x}} - \sqrt{{c_B}/{(1-x)}}},$
we have $g'(x) \le 0$ if and only if $x \ge \frac{c_A}{c_A + c_B}$.
Also $g(x)\ge0$ on $[0,1]$, so we conclude that $g(x)$ and $(g(x))^2$
are nonincreasing on the interval $[{\frac{c_A}{c_A + c_B}, 1}]$.
Since this interval contains the interval $[{\frac{c_A}{c_A+c_B},\beta}]$,
which contains $\gamma$, we have
$$\slowcol{G} \le g(\beta)^2\norm{V(G)} \le g(\gamma)^2 \norm{V(G)} =
\brac{\sqrt{c_A \gamma} + \sqrt{c_B \brac{1 - \gamma}}}^2\norm{V(G)}.$$

\vspace{-2.2pc}
\end{proof}

\bigskip
Since $g(x)$ is nonincreasing, the bound from Theorem~\ref{gamma} is
strongest when $\gamma=|A|/|V(G)|$, where it is the same as~\eqref{eq1},
and weakest when $\gamma=c_A/(c_A+c_B)$, where it is the same as~\eqref{eq2}.
When we know the coefficients $c_A$ and $c_B$ but do not know $|A|$ and $|B|$,
we may still be able to improve on~\eqref{eq2} if we can bound $|A|/|V(G)|$
from below.

For a $k$-degenerate graph with $k$ even, we can partition the vertices into
$k/2$ sets inducing forests and one independent set, but in doing this we
cannot control the size of the independent set.  For an acyclically
$k$-colorable graph with $k$ odd, we obtain the coloring first and combine the
classes arbitrarily in pairs to form sets inducing forests, so in this case we
can require the independent set to be the smallest of the classes and play the
role of $B$.  Since our previous bound is $2.5n$ for acyclically $3$-colorable
graphs, $4n$ for acyclically $5$-colorable graphs, and $\FR14(3k+1)n$ in
general, we obtain an improvement.

\begin{corollary}\label{acyc}
If $G$ is an acyclically $k$-colorable graph with $n$ vertices, where $k$ is
odd, then $\spo(G)\le \frac1k[\sqrt{.75}(k-1)+1]^2 n$.  In particular, the
coefficient on $n$ is less than $2.4881$ when $k=3$ and less than $3.9857$ when
$k=5$.
\end{corollary}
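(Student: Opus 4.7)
The plan is to apply Theorem~\ref{gamma} to a vertex partition $(A,B)$ where $B$ is a single color class of an acyclic $k$-coloring and $A$ is the union of the remaining $k-1$ classes. Fix an acyclic $k$-coloring with classes $C_1,\dots,C_k$, relabeled so that $C_1$ has the smallest size, and set $B=C_1$ and $A=V(G)\setminus B$. Because $k$ is odd, I can group $C_2,\dots,C_k$ into $(k-1)/2$ pairs, and by acyclicity each such pair induces a forest in $G$ (and hence in $G[A]$). So $G[A]$ has a vertex partition into $(k-1)/2$ forest-inducing sets, and Theorem~\ref{arb} yields $\spo(G[A])\le \FR{3(k-1)}{4}|A|$; take $c_A=\FR{3(k-1)}{4}$. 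Since $B$ is independent, $\spo(G[B])=|B|$, so $c_B=1$.

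Next I would bound $|A|/n$ from below. Because $B$ is the smallest of $k$ color classes, $|B|\le n/k$, and so $|A|/n\ge (k-1)/k$. I would then set $\gamma=(k-1)/k$. A quick check gives
$$\frac{c_A}{c_A+c_B}\;=\;\frac{3(k-1)}{3k+1}\;\le\;\frac{k-1}{k}\;=\;\gamma\;\le\;\frac{|A|}{n},$$
so the hypotheses of Theorem~\ref{gamma} are met. Its conclusion specializes to
$$\spo(G)\;\le\;\left(\sqrt{\FR{3(k-1)}{4}\cdot\FR{k-1}{k}}+\sqrt{\FR{1}{k}}\right)^{\!2}n\;=\;\FR{1}{k}\Bigl(\sqrt{.75}\,(k-1)+1\Bigr)^{\!2}n,$$
which is the stated bound. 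The numerical consequences for $k=3$ (less than $2.4881$) and $k=5$ (less than $3.9857$) follow by direct substitution.

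I do not foresee any real obstacle; the corollary is essentially a targeted application of Theorem~\ref{gamma}. The one design choice that matters is insisting that $B$ be the \emph{smallest} class, since this is exactly what supplies the lower bound $|A|/n\ge (k-1)/k$ needed to push $\gamma$ strictly above $c_A/(c_A+c_B)$. Without that lower bound, Theorem~\ref{gamma} collapses to bound~\eqref{eq2} from the decomposition theorem, giving only the weaker estimate $\FR{3k+1}{4}n$ previously recorded in Corollary~\ref{degen}; the improvement in Corollary~\ref{acyc} thus comes entirely from exploiting the size imbalance between the single independent set and the union of forests.
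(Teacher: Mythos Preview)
Your proof is correct and follows essentially the same approach as the paper: both take $B$ to be the smallest color class, set $A$ to be the union of the other $k-1$ classes (partitioned into $(k-1)/2$ forests), and apply Theorem~\ref{gamma} with $\gamma=(k-1)/k$. Your explicit computation $\frac{c_A}{c_A+c_B}=\frac{3(k-1)}{3k+1}$ and your closing remarks about why the smallest-class choice matters are nice additions, but the argument itself is the same as the paper's.
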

\begin{proof}
Let $B$ be the smallest color class in an acyclic $k$-coloring, and let $A$ be
the union of the $k-1$ largest color classes.  Since $A$ can be partitioned into
$(k-1)/2$ sets inducing forests in $G$, we have $\spo(G[A])\le \FR34(k-1)\C{A}$.
Since $\spo(G[B])=\C{B}$, we have $c_B=1$ and $c_A=\FR34(k-1)$ in the notation
of Theorem~\ref{gamma}.

The choice of $B$ guarantees $\C B\le n/k$ and $\C A\ge (k-1)n/k$.  Hence
$c_A|B|\le \FR34(k-1)n/k$ and $c_B|A|\ge (k-1)n/k$, so Theorem~\ref{gamma}
applies.  Since $\frac{c_A}{c_A+c_B}\le \FR{k-1}k\le \frac{|A|}{|A|+|B|}$,
we can set $\gamma=(k-1)/k$.  By Theorem~\ref{gamma},
$$
\spo(G)\le\left(\sqrt{\FR34(k-1)\FR{k-1}k}+\sqrt{1\cdot\FR1k}\right)^2n
=\frac1k[\sqrt{.75}(k-1)+1]^2 n.
$$

\vspace{-2pc}
\end{proof}

\bigskip
Grouping of color classes can be applied more generally, although doing so
does not yet improve on Corollary~\ref{acyc}.  Let $\cH$ be a hereditary family
of graphs.  A graph is $\cH$ $r$-colorable if its vertices can be partitioned
into $r$ sets inducing subgraphs in $\cH$.

\begin{proposition}\label{lem:decomposition_acyclic}
For $r\in\NN$, let $c_r$ be a constant such that $\spo(G) \le c_r|V(G)|$
whenever $G$ is $\cH$ $r$-colorable.  If $G$ is $\cH$ $(p+q)$-colorable, then
$$\spo(G) \le \frac{(\sqrt{pc_p}+\sqrt{qc_q})^2}{p+q}|V(G)|.$$
\end{proposition}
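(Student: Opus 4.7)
The plan is to leverage the freedom in how the $p+q$ $\mathcal{H}$-colored parts of $G$ can be merged into two groups: for each such merge, apply~\eqref{eq1} of Theorem~\ref{lem:decomposition} to the resulting bipartition, and then use an averaging argument to select a favorable merge. Let $n=|V(G)|$. First I would fix a partition $V(G)=W_1\cup\cdots\cup W_{p+q}$ with $G[W_i]\in\mathcal{H}$ for every $i$. For each $p$-subset $S\subseteq\{1,\ldots,p+q\}$, set $A_S=\bigcup_{i\in S}W_i$ and $B_S=V(G)\setminus A_S$; the restricted partitions witness that $G[A_S]$ is $\mathcal{H}$ $p$-colorable and $G[B_S]$ is $\mathcal{H}$ $q$-colorable, so the hypothesis of the proposition yields $\spo(G[A_S])\le c_p|A_S|$ and $\spo(G[B_S])\le c_q|B_S|$. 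Applying~\eqref{eq1} to the bipartition $(A_S,B_S)$ then gives
$$\spo(G)\le\bigl(\sqrt{c_p|A_S|}+\sqrt{c_q|B_S|}\bigr)^2 = f(\alpha_S)\,n,$$
where $\alpha_S=|A_S|/n$ and $f(\alpha)=\bigl(\sqrt{c_p\alpha}+\sqrt{c_q(1-\alpha)}\bigr)^2$.

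Setting $\alpha^*=p/(p+q)$, a direct computation gives $f(\alpha^*)=\frac{(\sqrt{pc_p}+\sqrt{qc_q})^2}{p+q}$, which matches the desired bound exactly, so it suffices to find at least one $S$ with $f(\alpha_S)\le f(\alpha^*)$. I would next verify that $f$ is concave on $[0,1]$: expanding yields $f(\alpha)=c_p\alpha+c_q(1-\alpha)+2\sqrt{c_pc_q}\sqrt{\alpha(1-\alpha)}$, and $\sqrt{\alpha(1-\alpha)}$ is concave as the composition of the concave increasing function $\sqrt{\cdot}$ with the concave quadratic $\alpha(1-\alpha)$. Drawing $S$ uniformly at random from $\binom{[p+q]}{p}$, each index lies in $S$ with probability $p/(p+q)$, so $\mathbb{E}[\alpha_S]=\alpha^*$. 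Jensen's inequality then gives $\mathbb{E}[f(\alpha_S)]\le f(\alpha^*)$, so at least one $S$ achieves $f(\alpha_S)\le f(\alpha^*)$; using that $S$ in the displayed inequality above finishes the proof.

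The main obstacle is conceptual rather than computational: one must recognize that the freedom in choosing which $p$ of the $p+q$ parts are bundled into $A$ reduces the task to bounding $f(\alpha_S)$ over the achievable values of $\alpha_S$, and that although $\alpha_S=p/(p+q)$ usually cannot be realized exactly, the concavity of $f$ coupled with an averaging (Jensen) argument nonetheless delivers the target value $f(p/(p+q))$. The supporting pieces---evaluating $f$ at $\alpha^*$ and checking the concavity of $\sqrt{\alpha(1-\alpha)}$---are routine.
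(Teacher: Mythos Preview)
Your proof is correct but proceeds differently from the paper's. The paper sorts the $p+q$ color classes by size, lets $P$ be the union of the $p$ largest and $Q$ the rest, uses a symmetry argument to arrange that $c_p/(c_p+c_q)\le p/(p+q)$, and then observes that the ordering forces $|P|/n\ge p/(p+q)$; this places $\gamma=p/(p+q)$ in the interval required by Theorem~\ref{gamma}, whose monotonicity-of-$g$ argument delivers the bound. You instead average over all $\binom{p+q}{p}$ ways of grouping the classes, use concavity of $f$ together with Jensen to conclude that some grouping achieves $f(\alpha_S)\le f(p/(p+q))$, and apply~\eqref{eq1} directly to that grouping. Your route is slicker in that it bypasses Theorem~\ref{gamma} and the sorting/symmetry bookkeeping; the paper's route has the advantage of being constructive, exhibiting the specific bipartition (largest $p$ classes versus smallest $q$) that works.
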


\begin{proof}
Consider an $\cH$ $(p+q)$-coloring of $G$ with colors $1,\ldots,p+q$, indexed
in nonincreasing order of the sizes of the color classes.  Note that
$\FR{c_p}{c_p+c_q}+\FR{c_q}{c_q+c_p}=1=\FR{p}{p+q}+\FR{q}{q+p}$.
Therefore, exactly one of $\FR{c_p}{c_p+c_q}\le \FR{p}{p+q}$ and 
$\FR{c_q}{c_q+c_p}<\FR{q}{q+p}$ holds.  By symmetry, we may assume
$\FR{c_p}{c_p+c_q}\le \FR{p}{p+q}$.

Let $P$ denote the set of vertices having colors $1,\ldots,p$, and let 
$Q=V(G)-P$.  Since $\chi_a(G\sbrac{P}) \le p$ and $\chi_a(G\sbrac{Q}) \le q$,
we have $\spo(G[P])\le c_p|P|$ and $\spo(G[Q])\le c_q|Q|$.  By the indexing
of the color classes, $\FR{|P|}{|Q|}\ge\FR pq$, and hence
$\frac{\norm{P}}{\norm{P}+\norm{Q}} \ge \frac{p}{p+q}$.

With $\gamma = \frac{p}{p + q}$, we thus have
$\frac{c_p}{c_p+c_q} \le \gamma \le \frac{\norm{P}}{\norm{P}+\norm{Q}}$.
Now we apply Theorem~\ref{gamma} using sets $P$ and $Q$ and parameter $\gamma$
to obtain
$$
\slowcol{G}\le
\left(\sqrt{c_p \frac{p}{p+q}}+\sqrt{c_q\frac{q}{p+q}}\right)^2\norm{V(G)} 
=\frac{\brac{\sqrt{pc_p} + \sqrt{qc_q}}^2}{p + q}\norm{V(G)}\text{.}
$$

\vspace{-2.7pc}
\end{proof}

\bigskip
\bigskip

In fact, Proposition~\ref{lem:decomposition_acyclic} provides a recursive
upper bound for the sequence $\{c_r\}_{r\ge1}$, by minimizing that bound
over $\{p,q\}$ such that $p+q=r$.

Borodin~\cite{B79} proved that planar graphs are acyclically $5$-colorable, so
the bound we have given for acyclically $5$-colorable graphs holds also for all
planar graphs.  It is slightly better than the trivial lower bound of
$4|V(G)|$ implied by the Four Color Theorem~\cite{AH}.  In Section~\ref{sec:pla}
we will improve the general upper bound for planar graphs to $3.4|V(G)|$.  Our
results above allow us to improve the bound further for a special class of
planar graphs.  A graph is {\it Hamiltonian} if it has a spanning cycle.

\bigskip
\begin{proposition}\label{doubletree}
If $G$ is a plane graph whose vertices can be partitioned into sets $A$ and $B$
such that $\spo(G)\le c|A|$ and $\spo(G[B])\le c|B|$, then $\spo(G)\le2c|V(G)|$.
In particular, $\spo(G)\le 3|V(G)|$ when the dual graph of $G$ is Hamiltonian.
\end{proposition}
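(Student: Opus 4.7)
The first assertion is immediate from Theorem~\ref{decomp}: applying inequality~\eqref{eq2} to the partition $\{A,B\}$ with coefficients $c_A = c_B = c$ gives $\spo(G) \le 2c|V(G)|$. The plan for the second assertion is to reduce it to the first, by showing that if $G^*$ has a Hamiltonian cycle then $V(G)$ partitions into two sets each inducing a forest of $G$. Because $\spo(F)\le \frac{3}{2}|V(F)|$ for every forest $F$ (by~\cite{MPW} and the additivity of $\spo$ over components), setting $c=\frac{3}{2}$ in the first assertion then yields $\spo(G)\le 3|V(G)|$.

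To produce such a partition, fix a plane drawing of $G$ and draw the dual $G^*$ in the standard way, with each dual vertex placed inside its corresponding face of $G$ and each dual edge crossing only the primal edge to which it is dual. A Hamiltonian cycle $C^*$ of $G^*$ then realizes as a simple closed curve in the plane that visits every face-point of $G$ exactly once, crosses precisely the primal edges dual to the edges of $C^*$, and avoids every vertex of $G$. By the Jordan curve theorem this curve separates the plane into two open regions; let $A$ and $B$ be the vertices of $G$ in these regions, respectively.

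The main step is to verify that $G[A]$ is a forest, the case of $G[B]$ being symmetric. Suppose for contradiction that $\gamma$ is a cycle in $G[A]$. Since each edge of $\gamma$ has both ends in $A$, none of its edges is crossed by $C^*$, so $\gamma$ drawn in the plane is a simple closed curve disjoint from $C^*$. Both the interior and the exterior of $\gamma$ contain at least one face of $G$ (the outer face of $G$ lies outside any bounded cycle of $G$, and the bounded interior contains at least one face), so $C^*$ must visit face-points on both sides of $\gamma$ and therefore cross $\gamma$. Any such crossing would have to occur along a primal edge on $\gamma$, contradicting the fact that no edge of $\gamma$ is crossed by $C^*$; hence $G[A]$ is acyclic. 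The main obstacle is this topological step: the duality strategy is classical, but the Jordan-curve argument, together with potential degeneracies such as low connectivity or multi-edges, warrants careful setup before the contradiction can be cleanly extracted.
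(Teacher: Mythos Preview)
Your argument is correct and follows the same route as the paper: the first claim is exactly~\eqref{eq2} with $c_A=c_B=c$, and for the second you reduce to a two-forest vertex partition and invoke $c=\tfrac32$. The only difference is that the paper treats the implication ``dual Hamiltonian $\Rightarrow$ $V(G)$ partitions into two forest-inducing sets'' as a known fact (citing Stein~\cite{St}), whereas you supply a direct Jordan-curve proof of that direction. Your topological argument is sound; the degeneracies you flag (bridges, multi-edges in the dual) are real but do not affect the needed direction, since a Hamiltonian cycle in $G^*$ still yields a simple closed curve crossing each primal edge at most once.
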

\begin{proof}
The first claim is a special case of~\eqref{eq2}.  For the second, it is
well known that the vertices of a plane graph $G$ can be partitioned into sets
$A$ and $B$ inducing forests if and only if the dual graph of $G$ is Hamiltonian
(Stein~\cite{St} proved this for plane graphs whose faces are all triangles).
Hence in this case the hypothesis holds with $c=\FR32$.  
\end{proof}

\bigskip
We know of no $n$-vertex planar graph $G$ with $\spo(G)>5n/2$; equality holds
for graphs whose components are copies of $K_4$.  Note also that since planar
graphs are $4$-colorable, the vertex set of any planar graph
can be partitioned into two sets inducing bipartite graphs.  Hence bounding
$\spo(G)$ for planar bipartite graphs may also be of interest.  Here there is
a nontrivial construction.  Puleo (private communication) showed that
$\spo(G)=1.75n-1$ when $G$ is the cartesian product of a $4$-cycle and a path;
this graph $G$ is planar and bipartite.

%This approach cannot do better than $3.25n$ for planar graphs due to the
%$3$-dimensional cube $Q_3$ with eight vertices.  The graph is bipartite and
%planar, and $\spo(Q_3)\ge13$.  To achieve this lower bound, Lister initially
%marks the vertices of a $6$-cycle having a chord.  Either maximal response by
%Painter leaves a copy of $K_{1,3}$ plus an isolated vertex, and Lister scores
%$6+1$ more.  In fact, $\spo(Q_3)=13$, yielding the ratio $13/8$, so the
%decomposition argument cannot do better than $13n/4$.

\section{The Potential Method}\label{sec:potentials} %and Outerplanar Graphs

We introduce another technique to prove upper bounds for planar and outerplanar
graphs.  Painter uses a ``potential function'' $\Phi$ on the vertices and edges
of a graph that can be thought of as summing ``potential'' contributions to the
score in the remaining game.  The total potential is the sum of these
contributions.  The contributions are different in our two applications, but we
explain the technique first as a common generalization that can be applied to
other families of sparse graphs.

\begin{definition}\label{pot}
Let $\phi_G$ be a function assigning a positive real number (called 
``potential'') to each vertex and edge of a graph $G$.  For $G$ in a given
class $\cG$ of graphs, each edge will have potential $\pc$, and the vertex
potentials will be defined later in such a way that $\phi_H(x)\le \phi_G(x)$
when $x\in V(H)$ and $H\esub G$ (that is, the potential function is {\it
monotone}).  For a graph $G$, define the total potential $\Phi(G)$ by
${\Phi(G) = \sum_{x \in V(G) \cup E(G)} \phi_G(x)}$.
\end{definition}

The goal of this method is to prove $\spo(G)\le\Phi(G)$ for graphs $G$ in a
given hereditary class $\cG$ (closed under taking induced subgraphs).  Since
the potential has a contribution that is linear in the number of edges,
applying this to a family of $k$-colorable graphs can give an improvement
over the trivial bound only when the number of edges is at most linear in the
number of vertices, which holds for planar and outerplanar graphs.

When Lister marks a set $M$ in a graph $G$ in $\cG$, Painter will seek an
independent set $X\esub M$ such that $|M|\le\Phi(G)-\Phi(G-X)$.  That is, the
total score in the current round should be at most the loss in potential by
coloring $X$.  Since the potential is reduced to $0$ when the game is over,
always being able to find such a set $X$ yields $\spo(G)\le\Phi(G)$.  To
consider $X$, we define the ``utility'' of $X$ relative to $M$, which we also
split into contributions from the various vertices of $G$.  Let $\deg_G(v)$
denote the degree of vertex $v$ in a graph $G$.  

\begin{definition}\label{util}
Let $\cG$ be a hereditary class $\cG$ on which a monotone potential function is
defined, with $\pc$ being the potential of each edge.  For each $M\esub V(G)$
and each independent set $X$ contained in $M$, define the {\it utility} of $X$
by $u(X)=\Phi(G)-\Phi(G-X)-|M|$.  (We seek $X$ such that $u(X)\ge0$.)
Apportion $u(X)$ among the vertices of $G$ by letting
$$u_X(v) = \begin{cases}
           \phi_G(v) + \pc \deg_G(v) -1 & \text{if $v \in X$,} \\
           \phi_G(v) - \phi_{G-X}(v) - 1& \text{if $v \in M - X$,} \\
           \phi_G(v) - \phi_{G-X}(v) & \text{if $v \in V(G) - M$.}
          \end{cases}
$$
\end{definition}

\medskip
\begin{lemma}\label{prop:util}
For $X\esub M\esub V(G)$ with $X$ independent, $\sum_{v\in V(G)} u_X(v)=u(X)$.
Also, $u_X(v)\ge -1$ when the potential function is monotone.
\end{lemma}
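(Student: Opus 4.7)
The plan is to verify both claims by direct bookkeeping from the definitions. First I would expand
$\Phi(G) - \Phi(G-X)$ by separating the vertex contributions from the edge contributions. The vertices that disappear from $G$ when we delete $X$ contribute $\sum_{v \in X} \phi_G(v)$. For each surviving vertex $v \in V(G) - X$, its potential may drop, contributing $\phi_G(v) - \phi_{G-X}(v)$ to the difference. For the edge contributions, the edges deleted from $G$ are precisely those with at least one endpoint in $X$; here the key observation is that since $X$ is independent, no edge has \emph{both} endpoints in $X$, so the deleted edges are counted without duplication as $\sum_{v \in X} \deg_G(v)$, each contributing $\pc$ to the potential drop.

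Next I would subtract $|M|$ by writing $|M| = |X| + |M - X|$ and assigning a contribution of $-1$ to each vertex of $X$ and each vertex of $M - X$. Comparing the resulting per-vertex terms with the three cases in Definition~\ref{util} gives exactly $u_X(v)$ for $v \in X$, for $v \in M - X$, and for $v \in V(G) - M$ (where there is no $-1$ correction because such vertices are not in $M$). Summing these yields $\sum_{v \in V(G)} u_X(v) = u(X)$, proving the first assertion.

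For the inequality $u_X(v) \ge -1$, I would just check each case. If $v \in X$, then $u_X(v) = \phi_G(v) + \pc \deg_G(v) - 1$, which is at least $-1$ because vertex and edge potentials are positive by Definition~\ref{pot}. If $v \in M - X$, then $u_X(v) = \phi_G(v) - \phi_{G-X}(v) - 1$, which is at least $-1$ precisely because the potential function is monotone, so $\phi_G(v) \ge \phi_{G-X}(v)$. If $v \in V(G) - M$, the same monotonicity gives $u_X(v) = \phi_G(v) - \phi_{G-X}(v) \ge 0 \ge -1$.

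There is no real obstacle in this proof; the whole argument is definitional bookkeeping. The only subtle point that must be flagged is the use of the independence of $X$ when accounting for deleted edges, so that $\sum_{v \in X} \deg_G(v)$ correctly counts each removed edge exactly once and the term $\pc \deg_G(v)$ appearing in the $v \in X$ case of $u_X(v)$ is not double-counted.
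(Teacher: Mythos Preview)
Your proof is correct and follows essentially the same approach as the paper's: both expand $\Phi(G)-\Phi(G-X)$ into vertex and edge contributions, use the independence of $X$ to count deleted edges as $\sum_{v\in X}\pc\deg_G(v)$ without double-counting, distribute the $-|M|$ as a $-1$ on each vertex of $M$, and then read off the inequality $u_X(v)\ge -1$ from positivity of potentials and monotonicity. Your write-up is a bit more explicit in separating the three cases for the lower bound, but there is no substantive difference.
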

\begin{proof}
The terms $-1$ for $v\in X$ and $v\in M-X$ count $|M|$ negatively.  The
contribution of edges to $\Phi(G)-\Phi(G-X)$ is $\pc$ for every edge incident
to $X$; since $X$ is independent, this equals $\sum_{v\in X}\pc d_G(v)$.
When $v\in X$, vertex $v$ contributes $\phi_G(v)$ to $\Phi(G)$ and nothing
to $\Phi(G-X)$; otherwise, $v$ contributes $\phi_G(v)-\phi_{G-X}(v)$ to
$\Phi(G)-\Phi(G-X)$.

Since always $\phi_G(v)\ge 0$ and $\phi_G(v)-\phi_{G-X}(v)\ge 0$,
always $u_X(v)\ge -1$.
\end{proof}

When $G$ belongs to a family $\cG$ of $k$-colorable graphs and $M$ is a marked
set in $G$, we will cover $M$ using $k$ independent sets $\VEC {V'}1k$ such
that $\sum_i u(V'_i)\ge 0$ (some vertices may appear in more than one set).
Hence at least one set has nonnegative utility and can be chosen as the desired
play for Painter.  We may assume that $G[M]$ is connected, that is, $M$ is
a {\it connected set}.
In order to produce $\VEC {V'}1k$, we begin with a special $k$-coloring of
$G[M]$. 

\begin{definition}\label{components}
Given a connected set $M$ in a $k$-colorable graph $G$, let $T$ be the set of
vertices in $M$ that lie in no cycle in $G[M]$, and let $S=M-T$.  Call the
components of $G[T]$ {\it tree-components} and the components of $G[S]$
{\it cycle-components}.  A {\it good $k$-coloring} of $G[M]$ is a proper
$k$-coloring such that every tree component is $2$-colored and the color on
every vertex having a neighbor in a tree component is one of the two colors
assigned to that component.
\end{definition}

In Figure~\ref{fig:4col}, $T_1$ and $T_2$ are tree-components, $C_1$ and $C_2$
are cycle-components, and the marked vertices have degree at most $2$ in
$G[M]$.  We will focus on these later.

\begin{figure}[h!]
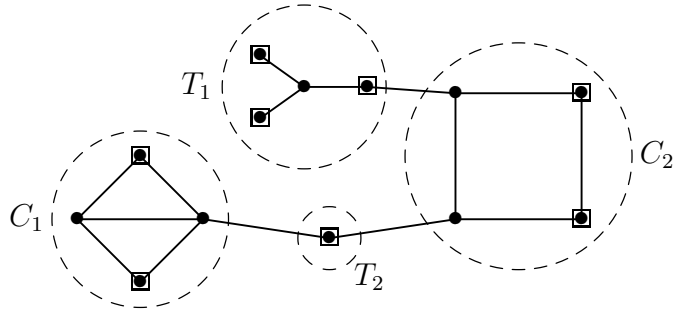

\gpic{
\expandafter\ifx\csname graph\endcsname\relax \csname newbox\endcsname\graph\fi
\expandafter\ifx\csname graphtemp\endcsname\relax \csname newdimen\endcsname\graphtemp\fi
\setbox\graph=\vtop{\vskip 0pt\hbox{%
    \graphtemp=.5ex\advance\graphtemp by 1.122in
    \rlap{\kern 0.264in\lower\graphtemp\hbox to 0pt{\hss $\bu$\hss}}%
    \graphtemp=.5ex\advance\graphtemp by 1.452in
    \rlap{\kern 0.594in\lower\graphtemp\hbox to 0pt{\hss $\bu$\hss}}%
    \graphtemp=.5ex\advance\graphtemp by 0.792in
    \rlap{\kern 0.594in\lower\graphtemp\hbox to 0pt{\hss $\bu$\hss}}%
    \graphtemp=.5ex\advance\graphtemp by 1.122in
    \rlap{\kern 0.924in\lower\graphtemp\hbox to 0pt{\hss $\bu$\hss}}%
    \graphtemp=.5ex\advance\graphtemp by 1.221in
    \rlap{\kern 1.584in\lower\graphtemp\hbox to 0pt{\hss $\bu$\hss}}%
    \graphtemp=.5ex\advance\graphtemp by 1.122in
    \rlap{\kern 2.244in\lower\graphtemp\hbox to 0pt{\hss $\bu$\hss}}%
    \graphtemp=.5ex\advance\graphtemp by 1.122in
    \rlap{\kern 2.904in\lower\graphtemp\hbox to 0pt{\hss $\bu$\hss}}%
    \graphtemp=.5ex\advance\graphtemp by 0.462in
    \rlap{\kern 2.904in\lower\graphtemp\hbox to 0pt{\hss $\bu$\hss}}%
    \graphtemp=.5ex\advance\graphtemp by 0.462in
    \rlap{\kern 2.244in\lower\graphtemp\hbox to 0pt{\hss $\bu$\hss}}%
    \special{pn 11}%
    \special{pa 264 1122}%
    \special{pa 594 1452}%
    \special{fp}%
    \special{pa 594 1452}%
    \special{pa 924 1122}%
    \special{fp}%
    \special{pa 924 1122}%
    \special{pa 594 792}%
    \special{fp}%
    \special{pa 594 792}%
    \special{pa 264 1122}%
    \special{fp}%
    \special{pa 264 1122}%
    \special{pa 924 1122}%
    \special{fp}%
    \special{pa 924 1122}%
    \special{pa 1584 1221}%
    \special{fp}%
    \special{pa 1584 1221}%
    \special{pa 2244 1122}%
    \special{fp}%
    \special{pa 2244 1122}%
    \special{pa 2904 1122}%
    \special{fp}%
    \special{pa 2904 1122}%
    \special{pa 2904 462}%
    \special{fp}%
    \special{pa 2904 462}%
    \special{pa 2244 462}%
    \special{fp}%
    \special{pa 2244 462}%
    \special{pa 2244 1122}%
    \special{fp}%
    \graphtemp=.5ex\advance\graphtemp by 1.452in
    \rlap{\kern 0.594in\lower\graphtemp\hbox to 0pt{\hss $\marker$\hss}}%
    \graphtemp=.5ex\advance\graphtemp by 0.792in
    \rlap{\kern 0.594in\lower\graphtemp\hbox to 0pt{\hss $\marker$\hss}}%
    \graphtemp=.5ex\advance\graphtemp by 1.221in
    \rlap{\kern 1.584in\lower\graphtemp\hbox to 0pt{\hss $\marker$\hss}}%
    \graphtemp=.5ex\advance\graphtemp by 1.122in
    \rlap{\kern 2.904in\lower\graphtemp\hbox to 0pt{\hss $\marker$\hss}}%
    \graphtemp=.5ex\advance\graphtemp by 0.462in
    \rlap{\kern 2.904in\lower\graphtemp\hbox to 0pt{\hss $\marker$\hss}}%
    \special{pn 8}%
    \special{ar 594 1122 462 462 -0.071429 0.071429}%
    \special{ar 594 1122 462 462 -0.333228 -0.190371}%
    \special{ar 594 1122 462 462 -0.595027 -0.452170}%
    \special{ar 594 1122 462 462 -0.856827 -0.713970}%
    \special{ar 594 1122 462 462 -1.118626 -0.975769}%
    \special{ar 594 1122 462 462 -1.380426 -1.237568}%
    \special{ar 594 1122 462 462 -1.642225 -1.499368}%
    \special{ar 594 1122 462 462 -1.904024 -1.761167}%
    \special{ar 594 1122 462 462 -2.165824 -2.022967}%
    \special{ar 594 1122 462 462 -2.427623 -2.284766}%
    \special{ar 594 1122 462 462 -2.689422 -2.546565}%
    \special{ar 594 1122 462 462 -2.951222 -2.808365}%
    \special{ar 594 1122 462 462 -3.213021 -3.070164}%
    \special{ar 594 1122 462 462 -3.474821 -3.331963}%
    \special{ar 594 1122 462 462 -3.736620 -3.593763}%
    \special{ar 594 1122 462 462 -3.998419 -3.855562}%
    \special{ar 594 1122 462 462 -4.260219 -4.117362}%
    \special{ar 594 1122 462 462 -4.522018 -4.379161}%
    \special{ar 594 1122 462 462 -4.783818 -4.640960}%
    \special{ar 594 1122 462 462 -5.045617 -4.902760}%
    \special{ar 594 1122 462 462 -5.307416 -5.164559}%
    \special{ar 594 1122 462 462 -5.569216 -5.426359}%
    \special{ar 594 1122 462 462 -5.831015 -5.688158}%
    \special{ar 594 1122 462 462 -6.092814 -5.949957}%
    \special{ar 1584 1221 165 165 -0.200000 0.200000}%
    \special{ar 1584 1221 165 165 -0.985398 -0.585398}%
    \special{ar 1584 1221 165 165 -1.770796 -1.370796}%
    \special{ar 1584 1221 165 165 -2.556194 -2.156194}%
    \special{ar 1584 1221 165 165 -3.341593 -2.941593}%
    \special{ar 1584 1221 165 165 -4.126991 -3.726991}%
    \special{ar 1584 1221 165 165 -4.912389 -4.512389}%
    \special{ar 1584 1221 165 165 -5.697787 -5.297787}%
    \special{ar 2574 792 594 594 -0.055556 0.055556}%
    \special{ar 2574 792 594 594 -0.251905 -0.140794}%
    \special{ar 2574 792 594 594 -0.448255 -0.337144}%
    \special{ar 2574 792 594 594 -0.644604 -0.533493}%
    \special{ar 2574 792 594 594 -0.840954 -0.729843}%
    \special{ar 2574 792 594 594 -1.037303 -0.926192}%
    \special{ar 2574 792 594 594 -1.233653 -1.122542}%
    \special{ar 2574 792 594 594 -1.430002 -1.318891}%
    \special{ar 2574 792 594 594 -1.626352 -1.515241}%
    \special{ar 2574 792 594 594 -1.822701 -1.711590}%
    \special{ar 2574 792 594 594 -2.019051 -1.907940}%
    \special{ar 2574 792 594 594 -2.215401 -2.104289}%
    \special{ar 2574 792 594 594 -2.411750 -2.300639}%
    \special{ar 2574 792 594 594 -2.608100 -2.496988}%
    \special{ar 2574 792 594 594 -2.804449 -2.693338}%
    \special{ar 2574 792 594 594 -3.000799 -2.889688}%
    \special{ar 2574 792 594 594 -3.197148 -3.086037}%
    \special{ar 2574 792 594 594 -3.393498 -3.282387}%
    \special{ar 2574 792 594 594 -3.589847 -3.478736}%
    \special{ar 2574 792 594 594 -3.786197 -3.675086}%
    \special{ar 2574 792 594 594 -3.982546 -3.871435}%
    \special{ar 2574 792 594 594 -4.178896 -4.067785}%
    \special{ar 2574 792 594 594 -4.375245 -4.264134}%
    \special{ar 2574 792 594 594 -4.571595 -4.460484}%
    \special{ar 2574 792 594 594 -4.767945 -4.656833}%
    \special{ar 2574 792 594 594 -4.964294 -4.853183}%
    \special{ar 2574 792 594 594 -5.160644 -5.049533}%
    \special{ar 2574 792 594 594 -5.356993 -5.245882}%
    \special{ar 2574 792 594 594 -5.553343 -5.442232}%
    \special{ar 2574 792 594 594 -5.749692 -5.638581}%
    \special{ar 2574 792 594 594 -5.946042 -5.834931}%
    \special{ar 2574 792 594 594 -6.142391 -6.031280}%
    \graphtemp=.5ex\advance\graphtemp by 0.429in
    \rlap{\kern 1.782in\lower\graphtemp\hbox to 0pt{\hss $\bu$\hss}}%
    \graphtemp=.5ex\advance\graphtemp by 0.429in
    \rlap{\kern 1.452in\lower\graphtemp\hbox to 0pt{\hss $\bu$\hss}}%
    \graphtemp=.5ex\advance\graphtemp by 0.264in
    \rlap{\kern 1.221in\lower\graphtemp\hbox to 0pt{\hss $\bu$\hss}}%
    \graphtemp=.5ex\advance\graphtemp by 0.594in
    \rlap{\kern 1.221in\lower\graphtemp\hbox to 0pt{\hss $\bu$\hss}}%
    \special{pn 11}%
    \special{pa 1452 429}%
    \special{pa 1782 429}%
    \special{fp}%
    \special{pa 1452 429}%
    \special{pa 1221 264}%
    \special{fp}%
    \special{pa 1452 429}%
    \special{pa 1221 594}%
    \special{fp}%
    \special{pa 1782 429}%
    \special{pa 2244 462}%
    \special{fp}%
    \special{pn 8}%
    \special{ar 1452 429 429 429 -0.076923 0.076923}%
    \special{ar 1452 429 429 429 -0.338722 -0.184876}%
    \special{ar 1452 429 429 429 -0.600522 -0.446676}%
    \special{ar 1452 429 429 429 -0.862321 -0.708475}%
    \special{ar 1452 429 429 429 -1.124121 -0.970274}%
    \special{ar 1452 429 429 429 -1.385920 -1.232074}%
    \special{ar 1452 429 429 429 -1.647719 -1.493873}%
    \special{ar 1452 429 429 429 -1.909519 -1.755673}%
    \special{ar 1452 429 429 429 -2.171318 -2.017472}%
    \special{ar 1452 429 429 429 -2.433118 -2.279271}%
    \special{ar 1452 429 429 429 -2.694917 -2.541071}%
    \special{ar 1452 429 429 429 -2.956716 -2.802870}%
    \special{ar 1452 429 429 429 -3.218516 -3.064670}%
    \special{ar 1452 429 429 429 -3.480315 -3.326469}%
    \special{ar 1452 429 429 429 -3.742115 -3.588268}%
    \special{ar 1452 429 429 429 -4.003914 -3.850068}%
    \special{ar 1452 429 429 429 -4.265713 -4.111867}%
    \special{ar 1452 429 429 429 -4.527513 -4.373667}%
    \special{ar 1452 429 429 429 -4.789312 -4.635466}%
    \special{ar 1452 429 429 429 -5.051111 -4.897265}%
    \special{ar 1452 429 429 429 -5.312911 -5.159065}%
    \special{ar 1452 429 429 429 -5.574710 -5.420864}%
    \special{ar 1452 429 429 429 -5.836510 -5.682663}%
    \special{ar 1452 429 429 429 -6.098309 -5.944463}%
    \graphtemp=.5ex\advance\graphtemp by 0.429in
    \rlap{\kern 1.782in\lower\graphtemp\hbox to 0pt{\hss $\marker$\hss}}%
    \graphtemp=.5ex\advance\graphtemp by 0.264in
    \rlap{\kern 1.221in\lower\graphtemp\hbox to 0pt{\hss $\marker$\hss}}%
    \graphtemp=.5ex\advance\graphtemp by 0.594in
    \rlap{\kern 1.221in\lower\graphtemp\hbox to 0pt{\hss $\marker$\hss}}%
    \graphtemp=.5ex\advance\graphtemp by 1.122in
    \rlap{\kern 0.000in\lower\graphtemp\hbox to 0pt{\hss $C_1$\hss}}%
    \graphtemp=.5ex\advance\graphtemp by 0.792in
    \rlap{\kern 3.300in\lower\graphtemp\hbox to 0pt{\hss $C_2$\hss}}%
    \graphtemp=.5ex\advance\graphtemp by 0.429in
    \rlap{\kern 0.891in\lower\graphtemp\hbox to 0pt{\hss $T_1$\hss}}%
    \graphtemp=.5ex\advance\graphtemp by 1.431in
    \rlap{\kern 1.794in\lower\graphtemp\hbox to 0pt{\hss $T_2$\hss}}%
    \hbox{\vrule depth1.584in width0pt height 0pt}%
    \kern 3.300in
  }%
}%
}

\vspace{-.5pc}
\caption{Decomposition of an induced subgraph of a $4$-colorable
graph.\label{fig:4col}}
\end{figure}

\begin{lemma}\label{goodcol}
Given a connected set $M$ in a $k$-colorable graph $G$, the induced subgraph
$G[M]$ has a good $k$-coloring.
\end{lemma}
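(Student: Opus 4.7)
The plan is to proceed by induction on $|M|$. The base case $|M| = 1$ is immediate, since then $G[M]$ has a single tree-component of size one, which we 2-color by any pair of colors containing its color. For the inductive step, we split on whether $G[M]$ has a vertex of degree one.

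Suppose $v$ is a leaf of $G[M]$ with unique neighbor $w$; since $v$ lies on no cycle, $v \in T$. If $w \in T$, then $v$ lies in the tree-component $T_i$ containing $w$ as a leaf of $T_i$. Removing $v$ leaves $G[M-v]$ connected and preserves the $T,S$ partition (a leaf is in no cycle, so no vertex shifts). We apply the inductive hypothesis, obtaining a good coloring $c'$ with $T_i - v$ two-colored by $\{\alpha_i,\beta_i\}$, and extend by coloring $v$ with the color opposite to $c'(w)$ in the bipartition of $T_i$. If instead $w \in S$, then $T_i = \{v\}$; we apply induction to $G[M-v]$ and extend by assigning $v$ any color different from $c'(w)$, declaring the two colors of $T_i$ to be $\{c(v), c'(w)\}$. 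Properness and the good-coloring conditions are routine to verify in both subcases.

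If $G[M]$ has no leaf, then either $T = \emptyset$, in which case any proper $k$-coloring of $G[M]$ is vacuously good, or every tree-vertex has degree $\geq 2$ and is therefore a cut vertex. In the latter case the block-cut tree of $G[M]$ has at least two leaf blocks, none of which can be a bridge (else the non-cut endpoint would be a degree-one vertex), so we pick a 2-connected leaf block $B$ with unique cut vertex $w$ and set $B' = V(B) \setminus \{w\}$, which satisfies $B' \subseteq S$ and $|B'| \geq 2$. The graph $G[M-B']$ is connected, so induction yields a good coloring $c'$, and we extend it to $B'$ by taking any proper $k$-coloring of $G[B]$ that agrees with $c'(w)$ on $w$ (obtained by permuting colors of some $k$-coloring of $G[B]$).

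The main obstacle is to verify the good-coloring property on $G[M]$ after this extension, because removing $B'$ may cause $w$ to shift from $S$ in $G[M]$ to $T$ in $G[M-B']$, precisely when $B$ is the only non-bridge block of $G[M]$ containing $w$; no other vertex can shift. When this happens, $w$'s tree-component $T_j$ in $G[M-B']$ fragments upon returning $w$ to $S$ into pieces $P_1,\dots,P_\ell$, each becoming a new tree-component of $G[M]$ that inherits the two colors $\{\alpha_j,\beta_j\}$ of $T_j$. One checks that each $P_r$'s attachment points in $G[M]$ are either attachment points of $T_j$ in $G[M-B']$ (colored in $\{\alpha_j,\beta_j\}$ by goodness of $c'$) or equal to $w$ (with $c'(w) \in \{\alpha_j,\beta_j\}$); meanwhile, vertices of $B'$ have all neighbors in $B \subseteq S$ and hence no neighbors in any tree-component, while all other tree-components remain unchanged. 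Handling this structural change is the principal technical step of the argument.
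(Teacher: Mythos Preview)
Your proof is correct, but it takes a substantially different and more laborious route than the paper's.

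The paper does not induct on $|M|$. Instead, it observes directly that if one contracts every tree-component and every cycle-component of $G[M]$ to a single vertex, the resulting graph $H$ is bipartite (tree-components only neighbor cycle-components and vice versa) and acyclic (a cycle in $H$ would yield a cycle in $G[M]$ through a tree-vertex). So $H$ is a forest. One then colors the components of $G[M]$ one at a time following a traversal of $H$: when a new component $Q$ is reached, exactly one edge $uv$ of $G[M]$ joins $Q$ to the already-colored part, so one takes any optimal proper coloring of $Q$ and permutes its colors so that the edge $uv$ is proper and, if $Q$ is a tree-component, so that $c(v)$ is among the two colors used on $Q$ (while if the processed neighbor is a tree-component $T_i$, one sets $c(u)$ to the color of $T_i$ not equal to $c(v)$). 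This gives the good coloring in one pass.

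The essential difference is that the paper works at the level of whole components and never deletes vertices, so the $T/S$ partition is fixed throughout and no bookkeeping is needed. Your induction instead repeatedly deletes leaves or leaf blocks, which forces you to confront the possibility that the cut vertex $w$ migrates from $S$ to $T$ upon deletion, causing a tree-component to fragment when $w$ is restored; handling this correctly (as you do) is exactly the overhead that the paper's global approach avoids. Your argument has the virtue of being a straightforward structural induction using only the block--cut tree, without needing to spot the contracted-component tree $H$; the paper's argument is shorter and more conceptual once that observation is made.
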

\begin{proof}
Note that no edge of $G[M]$ can join two cycle-components, and no edge can
join two tree-components.  Hence the graph $H$ obtained from $G[M]$ by
contracting each tree-component and each cycle-component to a single vertex
is bipartite, with one part corresponding to the tree-components and the
other part to the cycle-components.  Furthermore, $H$ is acyclic, since
vertices in tree-components lie in no cycle in $G[M]$.

We produce a proper coloring of $G[M]$ with colors $1$ through $k$.  First
choose a vertex of $H$ and give the corresponding subgraph of $G[M]$ an optimal
proper coloring.  Next, for any unprocessed vertex of $H$ whose corresponding
subgraph $Q$ has a neighbor $v$ that is already colored, give $Q$ an optimal
coloring, using the color on $v$ as one of the colors if $Q$ has at least one
edge.  Do this until all of $G[M]$ has been colored.  Since $H$ is a tree, the
process succeeds and produces a proper $k$-coloring.  Furthermore, each set
consisting of the vertices of a tree-component and their neighbors in
cycle-components uses only two colors.
\end{proof}

%Let $V_i$ be the set of vertices with color $i$ in a good $k$-coloring
%guaranteed by Lemma~\ref{goodcol}.  To guarantee that the sum of the utilities
%is nonnegative, we will augment these independent sets by allowing some
%vertices of low degree in $G[M]$ to receive more than one color.

To obtain the desired independent sets $\VEC{V'}1k$ such that
$\sum_i u(V'_i)\ge0$, we will start with sets $\VEC V1k$ forming a good
$k$-coloring of $G[M]$ and augment the sets by allowing some vertices with 
low degree in $G[M]$ to receive more than one color.

\section{Sparse $4$-Colorable Graphs}\label{sec:pla}

In this section we prove $\spo(G)\le \frac{8n+3m}5$ for every $4$-colorable
graph $G$ with $n$ vertices and $m$ edges, improving $\spo(G)<4n$ when $m<4n$.
Since $n$-vertex planar graphs are $4$-colorable and have at most $3n-6$ edges,
we obtain $\spo(G)\le 3.4n-3.6$ when $G$ is an $n$-vertex planar graph.

In order to apply the potential method, where
$\Phi(G) = \sum_{x \in V(G) \cup E(G)} \phi_G(x)$,
we specify $\phi_G$ when $G$ is a $4$-colorable graph.  In this section, let
\begin{equation*}
    \phi_G(x) = \left\{\def\arraystretch{1.4}\begin{array}{ll}
        \frac{3}{5} & \text{for } x \in E(G)\\
        1 + \frac{1}{5} \min\{\deg_G(x), 3\} & \text{for } x \in V(G) \\
        \end{array}\right.
\end{equation*}

Note that $\phi$ is monotone.  Since every vertex has potential at most
$\frac85$, proving $\spo(G)\le\Phi(G)$ implies the desired bound
$\spo(G)\le\frac{8n+3m}5$ in Theorem~\ref{thm:pla}(a).  Note that
$\spo(K_j)=\CH{j+1}2=\Phi(K_j)$ for $j\le4$.  In order to prove
$\spo(G)\le\Phi(G)$ by induction on $|V(G)|$, we have noted in
Section~\ref{sec:potentials} that it suffices to find, for each connected set
$M$, independent sets $\VEC {V'}14$ covering $M$ such that $\sum u(V'_i)\ge0$.
We obtain these independent sets from the sets $\VEC V14$ in a good
$4$-coloring of $G[M]$ by giving additional colors to some vertices of low
degree.

\begin{definition}\label{pathdef}
In a connected set $M$ in a $4$-colorable graph $G$,
let $P = \{v \in M\st \deg_{G[M]}(v) \leq 2 \}$.  Let $Q$ be the vertex set
of a component of $G[P]$.  Note that $G[Q]$ is a path or a cycle, and $Q$ is
contained in $T$ or is disjoint from $T$.

Let $R$ be a largest independent set in $G[Q]$, so $|R|=\CL{|Q|/2}$ when $G[Q]$
is a path and $|R|=\FL{|Q|/2}$ when $G[Q]$ is a cycle.  For $v\in R$,
add $v$ to all color classes that do not contain $v$ or any neighbor of $v$.
Since $R$ is independent in $G$, these additions can be made in any order.
Let $\VEC{V'}14$ be the resulting {\it augmented sets} containing $\VEC V14$,
respectively.
\end{definition}

To each color $i$, we have added only vertices with no neighbor in color $i$.
Hence the resulting sets $\VEC{V'}14$ are independent sets covering $M$.  It
remains only to prove $\SE i14 u(V'_i)\ge 0$.  The sum is the total utility
over each vertex in each set, grouped by the sets.  We can also group the
utility by vertices: let $u(v)=\SE i14 u_{V'_i}(v)$.  We will prove $u(v)\ge0$
when $v\in M-P$ and consider the vertices in $P$ grouped by their components in
$G[P]$.

For $x \in V(G)$, let $N_G(x)$ denote the set of neighbors of $x$ in $G$, and
let $c(x)$ denote the number of colors assigned to $x$.  Let
$s(x)=\sum_{y \in N_{G[M]}(x)} c(y)$.  A lemma greatly simplifies the subsequent case
analysis.

\begin{lemma}\label{lem:util}
If $\VEC{V'}14$ are the augmented sets covering a connected set $M$ in a
$4$-colorable graph $G$, and $x\in M$, then
\begin{equation*}
    u(x)\ge \left\{\def\arraystretch{1.4}\begin{array}{ll}
   \left(\FR45\deg_G(x)+1\right)c(x)+\FR15 s(x)-4 & \text{if } \deg_G(x)\le3\\
        4c(x)-4 & \text{if } \deg_G(x)\ge4
        \end{array}\right.
\end{equation*}
\end{lemma}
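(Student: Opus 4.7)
The plan is to compute $u(x)$ directly from Definition~\ref{util}, splitting the contribution into the sets $V_i'$ containing $x$ (of which there are $c(x)$) and those not containing $x$ (of which there are $4-c(x)$). The whole argument is a mechanical calculation in two cases.

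First I would record two facts that drive everything. Since each $V_i'$ is independent in $G$, whenever $x\in V_i'$ we have $N_G(x)\cap V_i'=\emptyset$, and a neighbor $y\in N_G(x)\cap M$ appears in exactly $c(y)$ of the augmented sets while a neighbor outside $M$ appears in none. Consequently
\[
\sum_{i=1}^4 \bigl|N_G(x)\cap V_i'\bigr|
 \;=\; \sum_{i\st x\notin V_i'} \bigl|N_G(x)\cap V_i'\bigr|
 \;=\; \sum_{y\in N_{G[M]}(x)} c(y) \;=\; s(x).
\]
Also, for any $V_i'$ with $x\notin V_i'$, $\phi_{G-V_i'}(x)=1+\tfrac15\min\{d_G(x)-|N_G(x)\cap V_i'|,\,3\}$.

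For the case $\deg_G(x)\le 3$, I would plug in $\phi_G(x)=1+\tfrac15\deg_G(x)$. When $x\in V_i'$, the contribution is $\phi_G(x)+\tfrac35\deg_G(x)-1=\tfrac45\deg_G(x)$. When $x\in M\grminus V_i'$, the degree of $x$ in $G-V_i'$ is still at most $3$, so the potential loss at $x$ is exactly $\tfrac15|N_G(x)\cap V_i'|$, giving contribution $\tfrac15|N_G(x)\cap V_i'|-1$. Summing the $c(x)$ terms of the first type and the $4-c(x)$ terms of the second type, then applying the identity above, yields
\[
u(x) \;=\; c(x)\cdot\tfrac45\deg_G(x) + \tfrac15 s(x) - (4-c(x))
\;=\; \bigl(\tfrac45\deg_G(x)+1\bigr)c(x) + \tfrac15 s(x) - 4,
\]
matching the claimed bound with equality.

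For the case $\deg_G(x)\ge 4$, I would use $\phi_G(x)=\tfrac85$. When $x\in V_i'$, the contribution is $\tfrac85+\tfrac35\deg_G(x)-1\ge \tfrac35+\tfrac35\cdot 4=3$. When $x\in M\grminus V_i'$, monotonicity gives $\phi_{G-V_i'}(x)\le \phi_G(x)=\tfrac85$, hence $u_{V_i'}(x)\ge -1$. Summing, $u(x)\ge 3c(x)-(4-c(x))=4c(x)-4$, as required.

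There is no real obstacle; the only point requiring care is the bookkeeping identity $\sum_i|N_G(x)\cap V_i'|=s(x)$, which is where independence of the augmented sets is used essentially, and the observation that in the low-degree case the $\min\{\cdot,3\}$ in the vertex-potential formula is always active on both $G$ and $G-V_i'$, so the potential difference is linear in the number of removed neighbors and combines cleanly with $s(x)$.
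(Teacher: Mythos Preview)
Your proof is correct and follows essentially the same route as the paper's: split $u(x)$ into contributions from the $c(x)$ sets containing $x$ and the $4-c(x)$ sets not containing $x$, use that the potential drop at $x$ in the low-degree case is exactly $\tfrac15$ per removed neighbor, and in the high-degree case bound each positive term by $3$ and each negative term by $-1$. Your explicit bookkeeping identity $\sum_i|N_G(x)\cap V_i'|=s(x)$ (via independence of the $V_i'$) is exactly what the paper uses implicitly, and your observation that equality holds when $\deg_G(x)\le 3$ is also implicit in the paper's computation.
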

%\end{equation*}
%  For $x \in V$, if $deg_G(x) \leq 3$, then
%  \begin{equation*}
%  u(x) \geq \left(\frac{4}{5} deg_G(x) + 1 \right) c(x) + \frac{1}{5} s(x) - 4
%  \end{equation*}
%  otherwise, if $deg_G(x) \geq 4$, then
%  \begin{equation*}
%  u(x) \geq 4c(x) - 4
%  \end{equation*}

\begin{proof}
%We will only prove the first part of the Lemma, as the second one follows
%after a similar, although easier, computation.
First suppose $\deg_G(x) \leq 3$.  If $x\in V'_i$, then
%\begin{equation*}
$u_{V'_i}(x) = \phi_G(x)+{\tsty\FR35}\deg_G(x) - 1 = {\tsty\FR45}\deg_G(x)$.
%\end{equation*}
When $x\notin V'_i$, the difference between $\phi_G(x)$ and $\phi_{G-V'_i}(x)$
is $\FR15|N(x)\cap V'_i|$, since when $\deg_G(x)\le 3$ the potential decreases
by $\FR15$ for each lost neighbor.  Summing over the $4-c(x)$ values of $i$
such that $x\notin V'_i$, we obtain
\begin{equation*}
\sum_{i\st x \notin V'_i} u_{V'_i}(x)
= \sum_{i\st x \notin V'_i} \left({\tsty\FR15}\norm{N(x)\cap V'_i}-1\right)
= {\tsty\FR15} s(x) - \left(4 - c(x)\right).
\end{equation*}
Adding $\FR45\deg_G(x)$ for each of the $c(x)$ values of $i$ with
$x\in V'_i$ yields the desired value.

When $\deg_G(x)\ge 4$, we have $\phi_G(x)=\FR85$ and
$\phi_G(x)-\phi_{G-V'_i}(x)\ge0$.  Hence $u_{V'_i}(x)=\FR35(d_G(x)+1)$ if
$x\in V'_i$ and $u_{V'_i}(x)\ge -1$ if $x\notin V'_i$.  Again there are
$c(x)$ indices of the former type and $4-c(x)$ of the latter type, so
\begin{equation*}
u(x)\ge {\tsty\FR35}(d_G(x)+1)c(x)-[4-c(x)]
=({\tsty\FR35 d_G(x)+\FR85})c(x)-4\ge 4c(x)-4.
\end{equation*}

\vspace{-2.5pc}
\end{proof}

\medskip
As mentioned earlier, to complete the proof of Theorem~\ref{thm:pla}(a) it
suffices to prove the following lemma.

\begin{lemma}\label{lemma:nonneg}
If $\VEC{V'}14$ are the augmented sets covering a connected set $M$ in a
$4$-colorable graph $G$, then $\SE i14 u(V'_i)\ge0$.
\end{lemma}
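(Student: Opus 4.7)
The plan is to decompose $\sum_{i=1}^{4} u(V'_i) = \sum_{v \in V(G)} u(v)$ (where $u(v) := \sum_i u_{V'_i}(v)$) and prove non-negativity by splitting across the vertex classes $V(G) - M$, $M - P$, and the components of $G[P]$.

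For $v \in V(G) - M$, $u(v) = \sum_i \bigl(\phi_G(v) - \phi_{G - V'_i}(v)\bigr) \ge 0$ by monotonicity of $\phi$. For $v \in M - P$ we have $c(v) = 1$, so Lemma~\ref{lem:util} yields $u(v) \ge 0$: the case $\deg_G(v) \ge 4$ follows immediately from the $4c(v) - 4$ bound, while for $\deg_G(v) = 3$ all three neighbors of $v$ lie in $M$ and so $s(v) \ge 3$, giving $u(v) \ge \tfrac{17}{5} + \tfrac{s(v)}{5} - 4 = \tfrac{s(v) - 3}{5} \ge 0$, with a surplus of $\tfrac{c(y) - 1}{5}$ whenever a neighbor $y$ of $v$ has $c(y) \ge 2$.

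For the vertices in $P$, I argue one component $Q$ of $G[P]$ at a time. Each such $Q$ induces a path, a cycle, or an isolated vertex, and $R \cap Q$ is a maximum independent set of $G[Q]$. For $v \in R \cap Q$, having at most two $G[M]$-neighbors that use at most two colors different from $v$'s own forces $c(v) \ge 2$; moreover each such neighbor lies in $M - R$ and so has $c = 1$, giving $s(v) = \deg_{G[M]}(v)$. A direct substitution into Lemma~\ref{lem:util} then produces $u(v) \ge \tfrac{8}{5}$ when $\deg_{G[M]}(v) \ge 1$ and $u(v) \ge 0$ when $v$ is isolated in $G[M]$. For $v \in Q \setminus R$ we have $c(v) = 1$ and, by maximality of $R$, at least one neighbor in $R$; Lemma~\ref{lem:util} then gives a lower bound on $u(v)$ that may be mildly negative in the worst case.

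Substituting these bounds into $\sum_{v \in Q} u(v)$ reduces the target to a short numerical check on $G[Q]$ that I would verify by case analysis on whether $Q$ is a singleton, a path of length $\ge 1$, or a cycle of length $\ge 3$, and on parity, choosing $R$ to contain both endpoints whenever $Q$ is a path. The tight cases are $P_2$ and $C_3$, where the component sum equals exactly $0$; all longer components yield strictly positive totals. The main obstacle is handling external edges from $Q$ to $M - P$: such an edge at an $R$-vertex $v \in Q$ can drop $c(v)$ from $3$ to $2$, creating a $\tfrac{1}{5}$ deficit in the partial sum for $Q$, which is exactly cancelled by the $\tfrac{1}{5}$ surplus contributed by the same edge to $u(w)$ at its $M - P$-endpoint via the $\tfrac{s(w) - 3}{5}$ bound. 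Once this cross-boundary transfer is tracked, the total is non-negative.
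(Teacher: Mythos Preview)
Your decomposition into $V(G)-M$, $M-P$, and the components of $G[P]$ is exactly the paper's, and your bounds for the first two parts and the claim $u(v)\ge\tfrac85$ for $v\in R$ are all correct. The gap is in the last step.

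The ``$\tfrac15$ deficit plus transfer'' mechanism is both unnecessary and unsound. It is unsound because the surplus $\tfrac{s(w)-3}5$ at the $M-P$ endpoint $w$ is only available when $\deg_G(w)=3$; once $\deg_G(w)\ge 4$, Lemma~\ref{lem:util} gives merely $u(w)\ge 4c(w)-4=0$, and one can arrange all three of $w$'s off-colour neighbourhoods to leave $\deg_{G-V'_i}(w)\ge 3$, so $u(w)=0$ exactly and there is nothing to transfer. Thus your repair does not cover all cases.

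It is unnecessary because the deficit you are worried about never actually occurs. The point you are missing is the dichotomy built into the good colouring: every component $Q$ of $G[P]$ satisfies either $Q\subseteq T$ or $Q\cap T=\emptyset$. If $Q\subseteq T$, then by Definition~\ref{components} every $G[M]$-neighbour of a vertex $v\in Q$ (including any neighbour in $M-P$) uses one of the two colours of $v$'s tree-component, so for $v\in R$ we always have $c(v)=3$; consequently $s(v')\ge 3$ for each $v'\in Q\setminus R$ and $u(v')\ge -\tfrac85$, and $|R|\ge|Q\setminus R|$ finishes this case. If $Q\cap T=\emptyset$, then every $v\in Q$ lies on a cycle in $G[M]$, so $\deg_{G[M]}(v)=2$ and hence $\deg_G(v)\ge 2$; for $v'\in Q\setminus R$ this gives $s(v')\ge 3$ and $u(v')\ge -\tfrac45$, and $|R|\ge|Q|/3$ finishes this case. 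In particular the scenario driving your deficit --- a vertex $v'\in Q\setminus R$ with $\deg_{G[M]}(v')=1$ whose sole $R$-neighbour has $c=2$ --- cannot happen: $\deg_{G[M]}(v')=1$ forces $v'\in T$, hence $Q\subseteq T$, hence that $R$-neighbour has $c=3$. Once you insert this two-case split in place of the transfer argument, the proof goes through exactly as in the paper.
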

\begin{proof}
We break the sum into its vertex contributions, with
$u(v)=\SE i14 u_{V'_i}(v)$ for $v\in V(G)$.
We first show $u(v)\ge0$ for $v\in V(G)-P$ and then group the vertices
of $P$ by components of $G[P]$.

If $v\in V(G)-M$, then $u_{V'_i}(v)=\phi_G(v)-\phi_{G-V'_i}(v)\ge0$ for each
$i$, which suffices.  If $v\in M-P$ with $d_G(v)\ge4$, then
Lemma~\ref{lem:util} yields $u(v)\ge4c(v)-4\ge0$, since $c(v)\ge1$.  If
$v\in M-P$ with $d_G(v)<4$, then $v\notin P$ requires $d_G(v)=d_{G[M]}(v)=3$.
Hence $c(v)\ge1$ and $s(v)\ge3$, which by Lemma~\ref{lem:util} yields
$u(v)\ge0$.

It remains to prove $\sum_{v\in Q} u(v)\ge0$ when $Q$ is the vertex set of 
a component of $G[P]$.  Recall that $G[Q]$ is a path or a cycle, and vertices
in a largest independent subset $R \subseteq Q$ have been assigned additional
colors.  Also $Q\esub T$ or $Q\cap T=\nul$.

\Case{$Q \subseteq T$}
If $d_{G[M]}(v)=0$, then $v$ is in all augmented sets and $u(v)\ge0$.  Hence we
may assume $\deg_G(v) \geq \deg_{G[M]}(v) \geq 1$.

If $v \in R$, then only one color is used on $N_{G[T]}(v)$, so $c(v)=3$ and
$s(v)\ge 1$.  By Lemma~\ref{lem:util}, $u(v) \geq \frac{8}{5}$.
On the other hand, if $v\in Q-R$, then $c(v) = 1$ and $s(v) \geq 3$.  Now
Lemma~\ref{lem:util} yields $u(v) \geq -\frac{8}{5}$.
Since $|R| \geq |Q-R|$, we thus obtain $\sum_{v\in Q}u(v)\ge0$.

\Case{$Q \cap T = \emptyset$}
For $v \in Q$ we have $\deg_G(v) \geq \deg_{G[M]}(v)=2$, since $v$ lies on
a cycle in $M$.

If $v \in R$, then $c(v), s(v) \geq 2$, and Lemma~\ref{lem:util} yields
$u(v) \geq \frac{8}{5}$.
If $v \notin R$, then $c(v) = 1$, and at least one neighbor of $v$ lies in
$R$; choose $w\in N(v)\cap R$.  Since $w\in P$, we have $d_{G[M]}(w)=2$, so $w$
receives at least one extra color.  Since $d{G[M]}(v)=2$, there is another
neighbor of $v$ with at least one color, so $s(v)\ge3$.  Since also
$d_G(v)\ge2$, Lemma~\ref{lem:util} now yields $u(v) \geq -\frac{4}{5}$.

Since always $\norm{R} \geq \FR13\norm{Q}$ (with equality when $G[Q]=K_3$),
we have $|R|\ge \FR12|Q-R|$, and hence $\sum_{v\in Q}u(v)\ge0$.
\end{proof}

\section{Outerplanar Graphs}

For the family of outerplanar graphs, we use a different potential function.
A {\it triangle} is a $3$-vertex complete graph.
%Let $\deg_G(v)$ denote the degree of a vertex $v$ in a graph $G$.
%we may drop the subscript when the graph is understood.

\begin{definition}\label{pot}
For an outerplanar graph $G$, let
${\Phi(G) = \sum_{x \in V(G) \cup E(G)} \phi_G(x)}$, where
$$
\phi_G(x) =
\left\{
\begin{array}{rl}
  \smallskip
  \frac13 &\text{if $x$ is an edge of $G$,} \\
  \smallskip
  \frac53 & \text{if $x$ is a vertex in a triangle or having degree at least $3$
in $G$,} \\
  \smallskip
  \frac43 & \text{if $x$ is a vertex in no triangle and $x$ has degree
$1$ or $2$ in $G$,}\\
  \smallskip
  1 & \text{if $x$ is an isolated vertex in $G$.}
%  \smallskip
%  \frac{5}{3} & \text{if $x \in F(H)$ and $\deg_H(x) \ge 3$,} \\
\end{array}
\right.
$$
Note that always $\phi_H(x)\le \phi_G(x)$ when $H\esub G$; thus
$\phi$ is monotone on this family.
\end{definition}

Figure \ref{fig:potentials} illustrates the contributions to potential for
vertices in an outerplanar graph; every edge contributes $\FR13$.  As
motivation for the definition, note that if $H$ is $K_n$ for $n\in\{1,2,3\}$,
then $\Phi(H)=\CH{n+1}2=\spo(H)$.  

\begin{figure}[h]
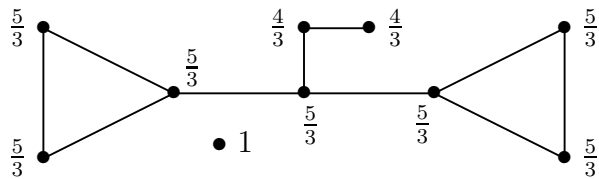

\gpic{
\expandafter\ifx\csname graph\endcsname\relax \csname newbox\endcsname\graph\fi
\expandafter\ifx\csname graphtemp\endcsname\relax \csname newdimen\endcsname\graphtemp\fi
\setbox\graph=\vtop{\vskip 0pt\hbox{%
    \graphtemp=.5ex\advance\graphtemp by 0.818in
    \rlap{\kern 0.136in\lower\graphtemp\hbox to 0pt{\hss $\bu$\hss}}%
    \graphtemp=.5ex\advance\graphtemp by 0.136in
    \rlap{\kern 0.136in\lower\graphtemp\hbox to 0pt{\hss $\bu$\hss}}%
    \graphtemp=.5ex\advance\graphtemp by 0.477in
    \rlap{\kern 0.818in\lower\graphtemp\hbox to 0pt{\hss $\bu$\hss}}%
    \graphtemp=.5ex\advance\graphtemp by 0.477in
    \rlap{\kern 1.500in\lower\graphtemp\hbox to 0pt{\hss $\bu$\hss}}%
    \graphtemp=.5ex\advance\graphtemp by 0.477in
    \rlap{\kern 2.182in\lower\graphtemp\hbox to 0pt{\hss $\bu$\hss}}%
    \graphtemp=.5ex\advance\graphtemp by 0.818in
    \rlap{\kern 2.864in\lower\graphtemp\hbox to 0pt{\hss $\bu$\hss}}%
    \graphtemp=.5ex\advance\graphtemp by 0.136in
    \rlap{\kern 2.864in\lower\graphtemp\hbox to 0pt{\hss $\bu$\hss}}%
    \graphtemp=.5ex\advance\graphtemp by 0.136in
    \rlap{\kern 1.500in\lower\graphtemp\hbox to 0pt{\hss $\bu$\hss}}%
    \graphtemp=.5ex\advance\graphtemp by 0.136in
    \rlap{\kern 1.841in\lower\graphtemp\hbox to 0pt{\hss $\bu$\hss}}%
    \graphtemp=.5ex\advance\graphtemp by 0.750in
    \rlap{\kern 1.057in\lower\graphtemp\hbox to 0pt{\hss $\bu$\hss}}%
    \special{pn 11}%
    \special{pa 818 477}%
    \special{pa 136 818}%
    \special{fp}%
    \special{pa 136 818}%
    \special{pa 136 136}%
    \special{fp}%
    \special{pa 136 136}%
    \special{pa 818 477}%
    \special{fp}%
    \special{pa 818 477}%
    \special{pa 2182 477}%
    \special{fp}%
    \special{pa 2182 477}%
    \special{pa 2864 818}%
    \special{fp}%
    \special{pa 2864 818}%
    \special{pa 2864 136}%
    \special{fp}%
    \special{pa 2864 136}%
    \special{pa 2182 477}%
    \special{fp}%
    \special{pa 1500 477}%
    \special{pa 1500 136}%
    \special{fp}%
    \special{pa 1500 136}%
    \special{pa 1841 136}%
    \special{fp}%
    \graphtemp=.5ex\advance\graphtemp by 0.818in
    \rlap{\kern 0.000in\lower\graphtemp\hbox to 0pt{\hss $\FR53$\hss}}%
    \graphtemp=.5ex\advance\graphtemp by 0.136in
    \rlap{\kern 0.000in\lower\graphtemp\hbox to 0pt{\hss $\FR53$\hss}}%
    \graphtemp=.5ex\advance\graphtemp by 0.347in
    \rlap{\kern 0.915in\lower\graphtemp\hbox to 0pt{\hss $\FR53$\hss}}%
    \graphtemp=.5ex\advance\graphtemp by 0.648in
    \rlap{\kern 1.534in\lower\graphtemp\hbox to 0pt{\hss $\FR53$\hss}}%
    \graphtemp=.5ex\advance\graphtemp by 0.642in
    \rlap{\kern 2.119in\lower\graphtemp\hbox to 0pt{\hss $\FR53$\hss}}%
    \graphtemp=.5ex\advance\graphtemp by 0.818in
    \rlap{\kern 3.000in\lower\graphtemp\hbox to 0pt{\hss $\FR53$\hss}}%
    \graphtemp=.5ex\advance\graphtemp by 0.136in
    \rlap{\kern 3.000in\lower\graphtemp\hbox to 0pt{\hss $\FR53$\hss}}%
    \graphtemp=.5ex\advance\graphtemp by 0.136in
    \rlap{\kern 1.364in\lower\graphtemp\hbox to 0pt{\hss $\FR43$\hss}}%
    \graphtemp=.5ex\advance\graphtemp by 0.136in
    \rlap{\kern 1.977in\lower\graphtemp\hbox to 0pt{\hss $\FR43$\hss}}%
    \graphtemp=.5ex\advance\graphtemp by 0.750in
    \rlap{\kern 1.193in\lower\graphtemp\hbox to 0pt{\hss $1$\hss}}%
    \hbox{\vrule depth0.955in width0pt height 0pt}%
    \kern 3.000in
  }%
}%
}

\vspace{-1pc}
\caption{Potentials of vertices.\label{fig:potentials}}
\end{figure}

A {\it maximal outerplanar graph} is an outerplanar graph that is not a
spanning subgraph of any other outerplanar graph.  For $n\ge3$, a maximal
outerplanar graph with $n$ vertices can be embedded in the plane so that
the boundary of the unbounded face is a spanning cycle and all bounded faces
are triangles.

\begin{lemma}\label{73n}
If $\spo(G)\le\Phi(G)$ whenever $G$ is a maximal outerplanar graph, then
$\spo(G)\le\FR73|V(G)|$ whenever $G$ is an outerplanar graph.
\end{lemma}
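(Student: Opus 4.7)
The plan is to reduce the general outerplanar case to the maximal outerplanar case by edge addition, and then read off the $\FR73 n$ bound from a short potential count. The first observation I would need is that $\spo$ is monotone under adding edges on a fixed vertex set: if $V(H)=V(H')$ and $E(H)\esub E(H')$, then every independent set of $H'$ is also independent in $H$, so any Painter strategy on $H'$ can be used verbatim on $H$, giving $\spo(H)\le\spo(H')$.

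Given an outerplanar graph $G$ with $n\ge 3$ vertices, I would fix an outerplanar embedding and triangulate every bounded face to obtain a maximal outerplanar supergraph $G^*$ on the same vertex set. Combining the monotonicity observation with the hypothesis of the lemma gives
\[
\spo(G)\;\le\;\spo(G^*)\;\le\;\Phi(G^*).
\]

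Next I would bound $\Phi(G^*)$ crudely. By Definition \ref{pot}, every vertex potential is at most $\FR53$, and every edge contributes exactly $\FR13$. A maximal outerplanar graph on $n\ge 3$ vertices has exactly $2n-3$ edges (the spanning cycle together with $n-3$ triangulating chords), so
\[
\Phi(G^*)\;\le\;\FR53 n+\FR13(2n-3)\;=\;\FR{7n-3}{3}\;<\;\FR73 n,
\]
as required. The cases $n\le 2$ are handled directly: a single vertex gives $\spo(G)=1\le\FR73$, and two vertices give $\spo(G)\le\spo(K_2)=3\le\FR{14}{3}$.

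There is no real obstacle here; the argument is a straightforward reduction. The only inputs beyond the hypothesis of the lemma are monotonicity of $\spo$ under edge addition, the standard edge count $2n-3$ for a maximal outerplanar graph, and the pointwise bound $\phi_G(v)\le\FR53$ that is visible from the definition. In particular the finer structural features of $\phi$ (the distinction between triangle vertices, low-degree non-triangle vertices, and isolated vertices) play no role at this stage — they will be needed only when proving the hypothesis $\spo(G)\le\Phi(G)$ itself on maximal outerplanar graphs.
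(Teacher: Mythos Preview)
Your proof is correct and follows essentially the same route as the paper: reduce to the maximal case via monotonicity of $\spo$ under edge addition, then use the edge count $2n-3$ and the vertex bound $\phi_G(v)\le\FR53$ to get $\Phi(G^*)=\FR53 n+\FR13(2n-3)<\FR73 n$, with small $n$ handled separately. The only cosmetic difference is that the paper notes every vertex of a maximal outerplanar graph actually lies in a triangle, so $\phi_{G^*}(v)=\FR53$ with equality, whereas you use the weaker pointwise bound; either suffices.
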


\begin{proof}
The desired bound holds by inspection when $\C{V(G)}\le2$.  By the monotonicity
of $\spo$, it suffices to prove $\spo(G)\le\FR73n$ when $n\ge3$ and $G$ is a
maximal outerplanar graph with $n$ vertices.  Such a graph $G$ has exactly
$2n-3$ edges and has every vertex in a triangle.  Hence each vertex has
potential $\FR53$, and
$$\spo(G)\le\Phi(G) = {\tsty\FR53}n + {\tsty\FR13}(2n-3) < {\tsty\FR73}n\text{.}$$

\vspace{-2pc}
\end{proof}

\bigskip
To prove Theorem~\ref{thm:outerplanar}, we will show that $\spo(G)\le\Phi(G)$
whenever $G$ is an induced subgraph of a maximal outerplanar graph.  The
relevant consequence of maximality here is that every vertex lying on a cycle
in $G$ in fact lies on a triangle.

The approach is as in Section~\ref{sec:potentials}.  With $\gamma=\FR13$ for
this potential function, and the connected set $M$ in $G$, we have the
definition of utility as in Definition~\ref{util}.
Lemma~\ref{prop:util} holds, and it suffices to prove $\sum_i u(V'_i)\ge 0$ for
independent sets $V'_1,V'_2,V'_3$ in $G$ covering $M$.
With {\it tree-components}, {\it cycle-components},
and {\it good $3$-coloring} defined as in Definition~\ref{components}, again
the proof of Lemma~\ref{goodcol} guaranteeing a good $3$-coloring 
$V_1,V_2,V_3$ is valid.  However, this time our method for obtaining
the augmented sets $V'_1,V'_2,V'_3$ is a bit different.

\begin{definition}\label{parts}
Given a connected set $M$ in an induced subgraph $G$ of a maximal outerplanar
graph $G$, with $M$ split into $T$ and $S$ as in Definition~\ref{components},
let $P=\{v\in T\st d_{G[M}(v)\le 2\}$.  Call each component of $G[P]$ a
{\it path-component}.  Let $N(T)$ be the set consisting of $T$ and all
neighbors in $G\sbrac{M}$ of vertices in $T$.
\end{definition}
%split $M$ into
%a \emph{triangular part} $R_M$ and a \emph{forest part} $F_M$ by putting $v$
%into $R_M$ if $v$ belongs to some triangle (copy of $K_3$) in $G[M]$, and
%otherwise $v\in F_M$.  Each component of $G\sbrac{F_M}$ is a
%\emph{tree-component}, and each component of $G\sbrac{R_M}$ an
%\emph{$R$-component}.  For a tree-component $T$, let $N(T)$ be the set
%consisting of $V(T)$ and all neighbors in $G\sbrac{M}$ of vertices in $V(T)$,
%and let $P(T)$ be the set of vertices in $V(T)$ having at most two neighbors in
%$N(T)$.  Each component of $G\sbrac{P(T)}$ is a \emph{path-component}.
%Let $\Pi_M$ be the set of all vertices in all path-components of all
%tree-components of $G[F_M]$.

The definition of $P$ here differs from Definition~\ref{pathdef} by restricting
$P$ to $T$.  For a connected set $M$ with $|M|\ge2$, each component of
$G[N(T)]$ is a tree with at least two vertices, on which the good $3$-coloring
guaranteed by Lemma~\ref{goodcol} uses two colors.  Since $G[M]$ is an induced
subgraph of a maximal outerplanar graph, each vertex of $S$ lies in a triangle
in $G[M]$.

\begin{definition}\label{augmop}
Given a connected set $M$ in an induced subgraph $G$ of a maximal outerplanar
graph, let $A,B,C$ be a good $3$-coloring of $G[M]$ as provided by
Lemma~\ref{goodcol}.  Let $Q$ be the vertex set of a path-component in $G[P]$,
having vertices $v_1, \ldots, v_k$ in order.  Let $Z$ be the color among
$\{A,B,C\}$ not initially used on the component of $N(T)$ containing $Q$.
If $\deg_G(v_1)\ge 2$, then add to the set of vertices with color $Z$ the
odd-indexed vertices $v_1, v_3,\ldots$.  If $\deg_G(v_1) = 1$, then instead add
the analogous set starting from the other end: $v_\ell,v_{\ell-2},\ldots$.
Do this independently for each path-component to produce
the {\it augmented sets} $A',B',C'$.
\end{definition}

Note that the augmented sets are independent, since in the good $3$-coloring
all neighbors in $G[M]$ of vertices in $Q$ receive colors other than $Z$.
The final lemma completes the proof of Theorem~\ref{thm:pla}(b).

\begin{lemma}\label{lem:potential_method}
If $G$ is an induced subgraph of a maximal outerplanar graph, then
$\slowcol{G} \le \Phi(G)$.
\end{lemma}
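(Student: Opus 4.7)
The plan is to prove $\spo(G) \le \Phi(G)$ by induction on $|V(G)|$; the outerplanar bound $\spo(G) \le \tfrac{7}{3}|V(G)|$ then follows from Lemma~\ref{73n}. The base case $V(G) = \emptyset$ is immediate. For the inductive step, suppose Lister marks a set $M \subseteq V(G)$. Since both $\spo$ and $\Phi$ are additive over components of $G[M]$, we may assume $G[M]$ is connected. Lemma~\ref{goodcol} supplies a good $3$-coloring $A, B, C$ of $G[M]$, and Definition~\ref{augmop} produces independent sets $A', B', C'$ covering $M$. As explained in Section~\ref{sec:potentials}, it suffices to prove
\[
  u(A') + u(B') + u(C') \ge 0;
\]
then some $V' \in \{A', B', C'\}$ satisfies $u(V') \ge 0$, meaning $|M| \le \Phi(G) - \Phi(G - V')$. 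Painter plays $V'$, applies the inductive hypothesis to $G - V'$, and the round closes with $\spo(G) \le |M| + \spo(G - V') \le \Phi(G)$.

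Before the utility estimate I would verify that each augmented set is independent in $G$. Within a path-component $Q$ the added color $Z$ is by construction the one missing from the $2$-colored component of $N(T)$ containing $Q$, so by the good-coloring property no neighbor of $Q$ outside $Q$ uses $Z$; inside $Q$ only alternate vertices are given color $Z$, so no edge of $G[Q]$ is monochromatic either.

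To bound $u(A') + u(B') + u(C')$, I would split by vertex, writing $u(v) = \sum_{i=1}^{3} u_{V'_i}(v)$ and showing $\sum_v u(v) \ge 0$. Vertices outside $M$ contribute nonnegatively by monotonicity of $\phi$. For $v \in M \setminus P$, I would use $\phi_G(v) = \tfrac{5}{3}$ (every vertex of $S$ lies in a triangle in $G[M]$, and every vertex of $T \setminus P$ has $\deg_G(v) \ge 3$ in $G$) together with the uniform lower bound $\phi_{G - V'_j}(v) \ge 1$ valid for every residual vertex, computing $u(v)$ in the style of Lemma~\ref{lem:util} and checking that it is nonnegative for each such $v$ individually.

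The main obstacle is the per-component accounting of $\sum_{v \in Q} u(v)$ for each path-component $Q = \{v_1, \ldots, v_k\}$. Here the augmentation gives every other vertex a second color, so positivity of individual $u(v)$ is not available: the augmented vertices contribute a surplus of roughly $\phi_G(v) + \tfrac{1}{3}\deg_G(v) - 1$ per extra color received, while the remaining vertices of $Q$ absorb the loss that arises when their $Z$-colored neighbors are removed from $G$. The switching convention in Definition~\ref{augmop} (starting from $v_k$ instead of $v_1$ when $\deg_G(v_1) = 1$) is the delicate point: a degree-$1$ endpoint has potential only $\tfrac{4}{3}$ and too little slack to spare an extra color, so the rule forces such an endpoint into the non-augmented parity. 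I would finish by separately checking short components (where endpoint effects dominate) and long components, exploiting the potential gap $\tfrac{5}{3}$ versus $\tfrac{4}{3}$ versus $1$ between triangle-incident, low-degree, and isolated vertices to verify $\sum_{v \in Q} u(v) \ge 0$ in each case.
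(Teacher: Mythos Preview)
Your overall plan matches the paper's: induction on $|V(G)|$, reduce to connected $M$, produce the augmented sets $A',B',C'$, and show $\sum_i u(V'_i)\ge 0$ by bounding $u(v)$ individually on $M\setminus P$ and collectively on each path-component. That scaffolding is correct. But there is a real gap in your treatment of $v\in M\setminus P$.

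For $v\in S$ with $\deg_G(v)\in\{2,3\}$, the bound ``$\phi_{G-V'_j}(v)\ge 1$'' (or any generic $u_{V'_j}(v)\ge -1$) is not enough. Take $\deg_G(v)=2$: the color containing $v$ gives $u=\tfrac53+\tfrac23-1=\tfrac43$, and two generic $-1$'s leave you at $-\tfrac23<0$. The paper uses the maximal-outerplanar structure here: $v\in S$ lies in a triangle $\{v,x,y\}$ receiving all three colors, so deleting either of the two colors not on $v$ destroys that triangle; with $\deg_G(v)=2$ this drops $\phi(v)$ from $\tfrac53$ to $\tfrac43$, giving $u\ge -\tfrac23$ for each of those two colors and a total of $\ge 0$. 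For $\deg_G(v)=3$ one needs the further outerplanar fact that the third neighbor $z$ cannot be adjacent to both $x$ and $y$ (else $K_4\subseteq G$), so at least one color removal still leaves $v$ triangle-free with degree $2$. None of this follows from ``the style of Lemma~\ref{lem:util}'': the outerplanar potential is not a monotone function of degree alone, so any analogue of that lemma must track triangle membership, and this is exactly where the hypothesis ``induced subgraph of a maximal outerplanar graph'' is used.

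A smaller point: your reading of the endpoint-switching rule is backwards. An augmented degree-$1$ endpoint is fine---it contributes $u(v)\ge\tfrac23>0$. The rule matters only for even-length path-components, where exactly one endpoint is augmented. The bad scenario would be an augmented endpoint of degree $1$ (only $\ge\tfrac23$) paired with a non-augmented endpoint of degree $\ge2$ (as low as $-1$), summing to $-\tfrac13$. The rule prevents this: if $\deg_G(v_1)\ge2$ then $v_1$ is the augmented endpoint and contributes $\ge1$; if $\deg_G(v_1)=1$ then $v_1$ is the non-augmented endpoint and contributes $\ge-\tfrac23$, balanced by $\ge\tfrac23$ from $v_\ell$.
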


\begin{proof}
We use induction on $|V(G)|$.  When $\norm{V(G)}=1$, both $\spo(G)$ and
$\Phi(G)$ equal $1$.  For $\norm{V(G)}>1$, let $M$ be the initial set marked by
Lister, which we may assume is connected.  It suffices to prove
$u(A')+u(B')+u(C')\ge0$ for the augmented sets $A',B',C'$ in
Definition~\ref{augmop}.

If $\norm{M}=1$ with $M=\{v\}$, then setting $X=M$ satisfies
$u(X)=\Phi(G)-\Phi(G-X)-1\ge0$, since $\phi_G(v)\ge1$ and vertex potentials
cannot increase when taking subgraphs.  Therefore, we may assume $\C M\ge2$.
Since $G[M]$ is connected, this implies $\deg_G(v)\ge\deg_{G\sbrac{M}}(v)\ge1$
for $v\in M$, so all vertices of $M$ have potential at least $\FR43$ in $G$.

We prove the desired inequality by breaking the sum into its contributions from
individual vertices, as in Lemma~\ref{lemma:nonneg}.  For $v\in V(G)$, let
$u(v)=u_{A'}(v)+u_{B'}(v)+u_{C'}(v)$.  Since
$\sum_{v\in M} u(v)=u(A')+u(B')+u(C')$, the argument is completed by proving 
$u(v) \ge 0$ for $v \in V(G)-P$ and $\sum_{v \in Q} u(v) \ge 0$ for every path
component $G[Q]$.

%For ease of reference, we recall the definitions of vertex potential and
%utility relative to an independent set $X$.
%$$
%\phi_G(x) =
%\left\{
%\begin{array}{rl}
%  \smallskip
%  \frac13 &\text{if $x$ is an edge of $G$,} \\
%  \smallskip
%  \frac53 & \text{if $x$ is a vertex in a triangle or with degree at least $3$
%in $G$,} \\
%  \smallskip
%  \frac43 & \text{if $x$ is a vertex in no triangle and $x$ has degree
%$1$ or $2$ in $G$,}\\
%  \smallskip
%  1 & \text{if $x$ is an isolated vertex in $G$.}
%%  \smallskip
%%  \frac{5}{3} & \text{if $x \in F(H)$ and $\deg_H(x) \ge 3$,} \\
%\end{array}
%\right.
%$$

%$$u_X(v) = \begin{cases}
%           \phi_G(v) + \frac{1}{3} \deg_G(v) -1 & \text{if $v \in X$,} \\
%           \phi_G(v) - \phi_{G-X}(v) - 1& \text{if $v \in M - X$,} \\
%           \phi_G(v) - \phi_{G-X}(v) & \text{if $v \in V(G) - M$.}
%          \end{cases}
%$$

Since always $\phi_G(v)\ge\phi_{G-X}(v)$ when $v\notin X$, for every
independent set $X\esub M$ we have $u_X(v)\ge0$ when $v\notin M$ and
$u_X(v)\ge-1$ when $v\in M$.  Hence we need only consider $v\in M$.

\bigskip
\noindent
{\bf CLAIM 1:} {\it $u(v)\ge0$ for $v\in M - P$.}
Vertex $v$ has exactly one color originally and after the augmentation; by
symmetry, we may assume $v \in A\esub A'$.  We consider cases depending on
$\deg_G(v)$.  If $\deg_G(v)\le1$, then $v$ cannot lie in a triange,
so $v\in T$, but then $v\in P$.  Hence for $v\in M-P$ we may assume
$\deg_G(v)\ge2$.

  \Cases

  \Case{$\deg_G(v) \ge 4$}
Since $\deg_G(v)\ge3$, we have $\phi_G(v)=\frac53$.  With $v\in A$ and
four incident edges, $u_{A'}(v)\ge\frac53 +4\cdot\frac13 - 1 = 2$.
We have noted $u_X(v)\ge -1$ for $X\in\{B',C'\}$, so $u(v)\ge0$.

  \Case{$\deg_G(v) = 3$}
First consider $v\in S$.  Choose $x, y \in M$ so that $\{v, x, y\}$ is a
triangle -- see Figure \ref{fig:deg_3_triangle}.

\begin{figure}[h]
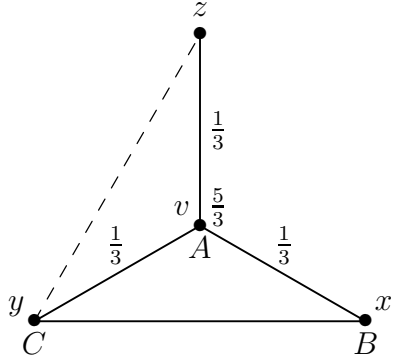

\gpic{
\expandafter\ifx\csname graph\endcsname\relax \csname newbox\endcsname\graph\fi
\expandafter\ifx\csname graphtemp\endcsname\relax \csname newdimen\endcsname\graphtemp\fi
\setbox\graph=\vtop{\vskip 0pt\hbox{%
    \graphtemp=.5ex\advance\graphtemp by 1.134in
    \rlap{\kern 1.000in\lower\graphtemp\hbox to 0pt{\hss $\bu$\hss}}%
    \graphtemp=.5ex\advance\graphtemp by 0.133in
    \rlap{\kern 1.000in\lower\graphtemp\hbox to 0pt{\hss $\bu$\hss}}%
    \graphtemp=.5ex\advance\graphtemp by 1.634in
    \rlap{\kern 1.867in\lower\graphtemp\hbox to 0pt{\hss $\bu$\hss}}%
    \graphtemp=.5ex\advance\graphtemp by 1.634in
    \rlap{\kern 0.133in\lower\graphtemp\hbox to 0pt{\hss $\bu$\hss}}%
    \special{pn 11}%
    \special{pa 1000 133}%
    \special{pa 1000 1134}%
    \special{fp}%
    \special{pa 1000 1134}%
    \special{pa 1867 1634}%
    \special{fp}%
    \special{pa 1867 1634}%
    \special{pa 133 1634}%
    \special{fp}%
    \special{pa 133 1634}%
    \special{pa 1000 1134}%
    \special{fp}%
    \special{pn 8}%
    \special{pa 133 1634}%
    \special{pa 1000 133}%
    \special{da 0.067}%
    \graphtemp=.5ex\advance\graphtemp by 0.000in
    \rlap{\kern 1.000in\lower\graphtemp\hbox to 0pt{\hss $z$\hss}}%
    \graphtemp=.5ex\advance\graphtemp by 1.540in
    \rlap{\kern 1.961in\lower\graphtemp\hbox to 0pt{\hss $x$\hss}}%
    \graphtemp=.5ex\advance\graphtemp by 1.540in
    \rlap{\kern 0.039in\lower\graphtemp\hbox to 0pt{\hss $y$\hss}}%
    \graphtemp=.5ex\advance\graphtemp by 1.268in
    \rlap{\kern 1.000in\lower\graphtemp\hbox to 0pt{\hss $A$\hss}}%
    \graphtemp=.5ex\advance\graphtemp by 1.768in
    \rlap{\kern 1.867in\lower\graphtemp\hbox to 0pt{\hss $B$\hss}}%
    \graphtemp=.5ex\advance\graphtemp by 1.768in
    \rlap{\kern 0.133in\lower\graphtemp\hbox to 0pt{\hss $C$\hss}}%
    \graphtemp=.5ex\advance\graphtemp by 1.040in
    \rlap{\kern 0.906in\lower\graphtemp\hbox to 0pt{\hss $v$\hss}}%
    \graphtemp=.5ex\advance\graphtemp by 1.040in
    \rlap{\kern 1.094in\lower\graphtemp\hbox to 0pt{\hss $\FR53$\hss}}%
    \graphtemp=.5ex\advance\graphtemp by 0.639in
    \rlap{\kern 1.094in\lower\graphtemp\hbox to 0pt{\hss $\FR13$\hss}}%
    \graphtemp=.5ex\advance\graphtemp by 1.240in
    \rlap{\kern 1.441in\lower\graphtemp\hbox to 0pt{\hss $\FR13$\hss}}%
    \graphtemp=.5ex\advance\graphtemp by 1.240in
    \rlap{\kern 0.559in\lower\graphtemp\hbox to 0pt{\hss $\FR13$\hss}}%
    \hbox{\vrule depth1.768in width0pt height 0pt}%
    \kern 2.000in
  }%
}%
}

\vspace{-.5pc}
\caption{Neighborhood of $v$ in Case 2 when $v\in S$ \label{fig:deg_3_triangle}}
\end{figure}

The vertices $v,x,y$ have different colors in the good $3$-coloring of $G[M]$,
so by symmetry we may assume $x\in B$ and $y\in C$.  Let $z$ be the neighbor of
$v$ outside $\{x,y\}$.  Since $G$ is outerplanar and hence cannot contain
$K_4$, vertex $z$ is not adjacent to both $x$ and $y$.  By symmetry, we may
assume $xz \notin E(G)$.  We have $u_{A'}(v)=\frac53+3\cdot\frac13-1 =\frac53$
and $u_{B'}(v)\ge-1$.  For $X=C$, note that after coloring $y$ the vertex $v$
will be in no triangle and have degree $2$, so $\phi_{G-X}(v)=\frac{4}{3}$.
Thus $u_{C'}(v)=\frac53-\frac43-1=-\frac23$, yielding $u(v)\ge 0$.

Now suppose $v\in T$.  All three neighbors of $v$ are in $M$, since otherwise
$v \in P$.  Since $v \in T$, the neighbors all have the same color; call
it $C$.  Again $u_{A'}(v) = \frac53 + 3 \cdot \frac13 - 1 = \frac{5}{3}$ and
$u_{B'}(v) \ge -1$.  In $G-C$, vertex $v$ is isolated, so
$u_{C'}(v)\ge\frac53-1-1 = -\frac13$; again $u(v) \ge 0$.

  \Case{$\deg_G(v) = 2$}
If $v \in T$, then $v \in P$, so we may assume $v \in S$.  Since
$d_G(v)=2$, exactly one triangle contains $v$, and its vertices have three
different colors.  Recall $v\in A$.  When $B'$ is deleted, $v$ is no longer in
a triangle, so $u_{B'}(v)\ge\frac53-\frac43-1=-\frac23$.  Similarly,
$u_{C'}(v)\ge -\frac23$.
Since $u_{A'}(v)\ge\frac53+2\cdot\frac13-1 = \frac{4}{3}$, we have $u(v) \ge 0$.

  \bigskip
\noindent
{\bf CLAIM 2:}
{\it $\SE i1\ell u(v_i)\ge0$ for a path-component with vertices
$\VEC v1\ell$ in order.}  Let $Q=\{\VEC v1\ell\}$.  By symmetry, let $A$ and
$B$ be the colors in the good $3$-coloring used on the tree-component
containing $Q$.  We consider two cases.

  \Cases

  \Case{$\ell=1$}
Let $Y\in\{A,B\}$ be the initial color on $v_1$, with $Z$ being the other
color in $\{A,B\}$.  Since $v_1$ is the first vertex of $Q$ from either end,
$v_1$ is added to $C'$ regardless of $d_G(v_1)$.  Since $G[M]$ is connected,
all vertices have potential at least $\frac43$, so
$u_{Y'}(v_1)\ge\FR43+\FR13-1=\FR23$.
Similarly $u_{C'}(v_1) \ge \frac{2}{3}$.  Since $u_{Z'}(v_1) \ge -1$, we have
$u(v_1) \ge \frac{1}{3} > 0$.

  \Case{$\ell \ge 2$}\label{cas:path_2}
Consider $v \in Q$.  Again let $Y\in\{A,B\}$ be the initial color on $v$ and
$Z\in\{A,B\}$ be the other initial color on $Q$.

First suppose $\deg_G(v) \ge 2$.  If $v$ is added to $C'$, then
$u_{Y'}(v)=u_{C'}(v)\ge\FR43+2\cdot\FR13-1=1$ and $u_{Z'}(v) \ge -1$, so
$u(v) \ge 1$.  If $v$ is not added to $C'$, then $u_{Y'}(v) \ge 1$,
$u_{Z'}(v) \ge -1$, and $u_{C'}(v) \ge -1$, so $u(v)\ge-1$.
Thus internal vertices of $G[Q]$ alternate bounds $u(v)\ge1$ and $u(v)\ge-1$.
\looseness-1

Degree $1$ in $G$ is possible when $v$ is an endpoint of $G[Q]$.  In that case
$u_{Y'}(v)\ge\frac43+\frac13-1=\frac23$.  Since deleting $Z'$ isolates $v$,
losing potential $\frac13$, we have $u_{Z'}(v)\ge-\frac23$.  If $v$ is added to
$C'$, then $u_{C'}(v)\ge\frac23$, and $u(v)\ge\frac23$.  If $v$ is not added to
$C'$, then its neighbor is added to $C'$, so $u_{Y'}(v)\ge\frac23$,
$u_{Z'}(v)\ge-\frac23$, and $u_{C'}(v)\ge -\frac23$, yielding
$u(v)\ge-\frac{2}{3}$.

If $\ell$ is odd, then $v_1$ and $v_\ell$ are both added to $C'$, and
$u(v_1)+u(v_\ell) \ge \frac{4}{3}$.  For the internal vertices,
$\SE i1{\ell-1} u(v_i) \ge -1$, so $\SE i1\ell u(v_i) \ge 0$.

If $\ell$ is even, then only one of $v_1$ and $v_\ell$ is added to $C'$.  This
endpoint contributes at least $\frac23$, and the other endpoint contributes at
least $-\frac23$.  Since the number of internal vertices is even, they also
contribute at least $0$, so $\SE i1\ell u(v_i) \ge 0$.
\end{proof}

  %From the above reasoning, we have that $\sum_{v \in P} u(v) \ge 0$.
  %Thus, $\sum_{v \in M} u(v) \ge 0$, $u(A) + u(B) + u(C) \ge 0$, and one of the sets $A$, $B$, or $C$ is a good response for Painter.

  %\input fig_case_path.tex

%\bibliographystyle{unsrt}

\end{document}